\documentclass[a4paper]{article}

\usepackage[colorlinks=true, allcolors=blue]{hyperref}
\usepackage{authblk}
\usepackage{bbm}
\usepackage{overpic}
\usepackage{rotating}
\usepackage{xcolor}
 \usepackage{relsize}

\usepackage[utf8]{inputenc}
\usepackage[english]{babel}
\usepackage{csquotes}
\usepackage{geometry}
\usepackage{amsfonts}
\usepackage{amsmath}
\usepackage{amsthm, thmtools}
\usepackage{amssymb}
\usepackage{xcolor}
\usepackage{bm}

\usepackage{algpseudocode}
\usepackage{algorithm}
\usepackage{algorithmicx}
\usepackage{amsopn}

\usepackage{caption}
\usepackage{subcaption}

\DeclareGraphicsExtensions{.png,.pdf,.jpg}

\numberwithin{equation}{section}

\newtheorem{theorem}{Theorem}[section]

\newtheorem{corollary}[theorem]{Corollary}

\newtheorem{remark}[theorem]{Remark}


\begin{document}
\title{Affine Invariant Langevin Dynamics for rare-event sampling}

\author[1]{Deepyaman Chakraborty}

\author[1]{Ruben Harris}

\author[1]{Rupert Klein}

\author[2]{Guillermo Olic\'on-M\'endez}

\author[2]{Sebastian Reich}

\author[1]{Claudia Schillings}

\affil[1]{Freie Universit\"at Berlin, Institut f\"ur Mathematik und Informatik, 14195 Berlin, Germany}

\affil[2]{Universit\"at Potsdam, Institut f\"ur Mathematik, 14476 Potsdam, Germany}

\maketitle

\begin{abstract}
We introduce an affine invariant Langevin dynamics (ALDI) framework for the
efficient estimation of rare events in nonlinear dynamical systems. Rare events are formulated as Bayesian inverse
problems through a nonsmooth limit-state function whose zero level set characterises the
event of interest. To overcome the nondifferentiability of this function, we
propose a smooth approximation that preserves the failure set and yields a
posterior distribution satisfying the small-noise limit. The resulting potential
is sampled by ALDI, a (derivative-free) interacting particle system whose affine
invariance allows it to adapt to the local anisotropy of the posterior.

We demonstrate the performance of the method across a hierarchy of benchmarks, namely
two low-dimensional examples (an algebraic problem with convex geometry and a dynamical problem of saddle-type instability) and a point-vortex model for atmospheric blockings. In all cases, ALDI
concentrates near the relevant near-critical sets and provides accurate
proposal distributions for self-normalised importance sampling. The framework is
computationally robust, potentially gradient-free, and well-suited for complex forward
models with strong geometric anisotropy. These results highlight ALDI as a
promising tool for rare-event estimation in unstable regimes of dynamical systems.
\end{abstract}



{\small \textbf{Keywords:}  Bayesian inverse problems, rare-event estimation, Affine invariant Langevin dynamics, point-vortex dynamics}

{\small \textbf{2020 Mathematics subject classification:} 65C05, 65C30, 65C35, 62F15, 76B47}

\section{Introduction}
\label{sec:intro}
Rare events play a crucial role in many complex dynamical systems, ranging from transitions between metastable states in molecular dynamics \cite{KoltaiEtAl2016} to critical shifts in atmospheric and oceanic circulation regimes. In geophysical contexts, such events include persistent \emph{atmospheric blockings}---large-scale quasi-stationary flow patterns that interrupt the usual westerly circulation and can lead to prolonged periods of heat or precipitation extremes. Although these phenomena occur infrequently, their consequences can be severe, and quantifying their probability and underlying mechanisms remains a major scientific challenge \cite{DaviniEtAl2021,Hirt18,lucarini2019,Muller15,Ragone20}

The mathematical and computational treatment of rare events requires specialised tools, since direct numerical simulation becomes infeasible due to the exponentially small probability of observing such anomalies. Classical approaches include importance sampling and splitting methods \cite{Beck2017, cerou}, variational methods \cite{hartmann}, large deviation theory \cite{stadler} and particle methods \cite{delmoral}. More recently, a probabilistic reformulation of rare-event estimation as a \emph{Bayesian inverse problem} has been proposed \cite{Wagner21}, enabling the use of sampling and inference techniques from uncertainty quantification. In this perspective, the rare event is identified with the conditioning of a prior measure on the event $\{G(u)\le 0\}$, where $G$ is a suitably chosen observable. The resulting posterior is highly concentrated in a low-probability region, making it a challenging target for standard sampling algorithms.

Ensemble-based methods such as the Ensemble Kalman Filter (EnKF) have been widely used in data assimilation and inverse problems due to their computational efficiency and ability to handle high-dimensional state spaces \cite{evensen2009,reichcotter2015}. They propagate and update an ensemble of particles according to system dynamics and observational constraints, providing a derivative-free approximation of Bayesian updates. Ensemble-based methods such as the EnKF have proved effective far beyond the linear–Gaussian setting and are routinely applied to nonlinear, non-Gaussian problems \cite{evensen2003,evensen2009,reichcotter2015,vanleeuwen2010}. Their analysis step, however, is built around updating the first and second moments via a linear correction, which—together with finite-ensemble effects, localisation, and inflation—can bias tail probabilities and collapse separated modes unless special care is taken (e.g., tempering, iterative updates, or mixture/particle–ensemble hybrids) \cite{vanleeuwen2010,bocquet2010}. Consequently, representing an extremely small posterior mass in the tail region relevant for rare events remains challenging in practice, even though many EnKF variants alleviate parts of this issue.

To target rare events more directly, \cite{Wagner21} proposed an EnKF-based formulation in which the event is encoded through an observable $G$ (taken, without loss of generality, to be nonpositive on the event), and the inverse problem
\begin{equation}
  \tilde{G}(u)=0, 
  \qquad \text{with} \quad
  \tilde{G}(u):=\max\{0,G(u)\},
\end{equation}
so that the event corresponds to the set $\{G(u)\le 0\}$. This casts rare-event estimation as Bayesian conditioning on (a smoothed version of) the indicator of the rare set, thereby providing an inference route that we will retain here while replacing the ensemble update with an affine-invariant Langevin sampler (ALDI) \cite{Nusken20} as a stochastic, geometry-adapted sampling method for rare-event estimation. ALDI combines affine invariance, ensuring that the sampling dynamics is unaffected by linear rescaling or anisotropy of the state space, with the stochastic exploration of Langevin-type diffusions. This results in improved stability and efficiency, especially in high-dimensional or strongly anisotropic settings. The present work thus bridges ensemble-based inference, stochastic dynamics, and rare-event analysis within a unified geometric framework, building upon earlier developments in Bayesian sampling and dynamical systems theory. In \cite{beh2025affineinvariantinteractinglangevin}, a related method based on a combination of Langevin samplers and importance sampling (IS) has been introduced, which employs a smooth
approximation of the failure event and provides an analysis of both the resulting
importance sampling error and the discretization bias of the Langevin algorithm used for sampling the smoothed density. This forms the basis for an optimal computational work distribution between the Langevin sampler and the IS step. In contrast, our focus is on the small-noise limit of the
associated Bayesian formulation: we characterise the limiting posterior as the prior
restricted to the failure set and prove total variation convergence. This offers a
measure-theoretic foundation that is independent of any specific sampling scheme and
complements the algorithmic analysis in~\cite{beh2025affineinvariantinteractinglangevin}.

We apply ALDI to rare-event estimation in two simple test cases to analyse its behaviour, and then a point-vortex model for two-dimensional flow, used to explore the persistence of quasistationary vortex configurations associated with atmospheric blockings \cite{Kuhlbrodt00}.
In both cases, we identify two qualitatively different mechanisms that generate low-probability events:
\begin{itemize}
    \item \textit{Parametric rare events}, where the event $\{G_\lambda(u)\le 0\}$ occurs only for specific parameter values $\lambda$, and
    \item \textit{Dynamic rare events}, where the event arises from the intrinsic instability of the system’s dynamics near invariant structures.
\end{itemize}

Understanding and quantifying such rare transitions is essential in many applications, as they often correspond to critical transitions or bifurcations in stochastic systems. Rare events are also related to bifurcations in stochastic dynamics when the noisy perturbations are bounded \cite{olicon24,olicon25}. Atmospheric blockings, for instance, represent prolonged excursions of the flow dynamics toward atypical attractors, the occurrence of which is often controlled by a delicate interplay between stochastic forcing, energy input, and multiscale interactions \cite{klein2021,ragone2021}.

\paragraph{Contributions and structure}
The main contributions of this paper are as follows:
\begin{itemize}
    \item Based on the reformulation of rare-event estimation as a Bayesian inverse problem, we introduce an ALDI method for exploring the associated conditional measure.
    \item We prove that, as the observation noise tends to zero, the estimator obtained from the affine-invariant Langevin dynamics converges to the true probability of the rare event, independently of the value of the smoothing parameter.
    \item We demonstrate the performance of ALDI on dynamical systems around unstable invariant objects, including vortex configurations related to atmospheric blockings.
\end{itemize}
The remainder of this paper is structured as follows.
Section~\ref{SEC:preliminaries} reviews the Bayesian formulation of rare-event estimation and presents the affine-invariant Langevin dynamics. To exploit its theoretical features, we introduce a smoothing technique used in this context, along theoretical results concerning the invariance and convergence of the method. In Section~\ref{SEC:numerics}, we test the method on two simple problems: an algebraic convex example (Subsection~\ref{SUBSEC:convex}) and a linear unstable dynamical system (Subsection~\ref{SUBSEC:saddle}). We further exploit the method for rare-event estimation in point-vortex dynamics (Subsection~\ref{SUBSEC:vortex}). Finally, Section~\ref{SEC:conclusions} summarises the results and discusses perspectives for future research.

\section{Preliminaries}
\label{SEC:preliminaries}
In this paper, we consider rare events defined by the outcome of a \textit{limit-state function} $G$. More precisely, let $(\Omega,\mathcal{F},\mathbb{P})$ be a probability space, and the state of a system given by a random variable $X:\Omega\rightarrow \mathbb{X}$, where the state space $\mathbb{X}=\mathbb R^d$ endowed with its Borel $\sigma$-algebra $\mathcal{B}$, and distributed according to $\mu_0(\mathrm d x)=\rho_0(x) \mathrm dx$. 

We will be interested in the convergence of a family of measures $\mu_\delta$ on $(\mathbb X, \mathcal{B})$ to a given measure $\mu$. Recall that the \emph{total variation distance} between probability distributions $\mu_1,\mu_2$ is defined as $\Vert \mu_1-\mu_2\Vert_{TV}:= \sup_{A\in\mathcal{B}} \vert \mu_1(A)-\mu_2(A)\vert$. Moreover, when both distributions are absolutely continuous with respect to the same distribution $\mu_0$, so that $\mu_i(dx)=\rho_i(x)\mu_0(dx)$, then
\[\Vert \mu_1-\mu_2\Vert_{TV}\ = \frac{1}{2}\Vert \rho_1-\rho_2\Vert_{L^1(\mu_0)}:= \frac{1}{2}\int_{\mathbb X} \vert \rho_1-\rho_2\vert d\mu_0. \]
Given a density $\rho$ of a distribution $\mu$, we will sometimes refer to the Lebesgue space $L^1(\rho)\equiv L^1(\mu)$.

\subsection{Rare events as Bayesian inverse problems}

For a given \emph{limit-state function} \( G : \mathbb{X} \rightarrow \mathbb{R} \), the \emph{failure domain} is defined (without loss of generality) by
\[
F=\{ G(x) \le 0 \},
\]
and we are interested in estimating the associated probability of failure
\begin{equation}
    \label{eq:probfailure_event}
    P_f := \mathbb{P}\left( \left\{ \omega \in \Omega : G(X(\omega)) \le 0 \right\} \right)=\int_\mathbb{X} \mathbf{1}_{F}(x) \mu_0(\mathrm dx)
\end{equation}
where $\mathbf 1 $ denotes the indicator function and $\mu_0$ a given measure on $\mathbb X$. 
In many situations, the failure domain is highly nonlinear or disconnected, and the corresponding event is rare. Consequently, estimating \( P_f \) accurately requires specialised methods that go beyond standard Monte Carlo techniques, which tend to undersample low-probability regions. One fruitful approach is to recast the identification of failure states as an \emph{inverse problem}. 

To this end, following \cite{Wagner21}, we introduce the modified limit-state function
\[
\tilde{G}(x) := \max\{ 0, G(x) \},
\]
and consider the inverse problem
\begin{equation}
    \label{eq:inverse}
    \tilde{G}(x) = 0,
\end{equation}
whose solution set coincides with the failure domain $\{G(x)\le 0\}$.

We adopt a Bayesian formulation by introducing an artificial observation model with additive noise,
\begin{equation}
    \label{eq:bayesian_inverseproblem}
    y = \tilde{G}(x) + \eta,
\end{equation}
where \( \eta \sim \mathcal{N}(0, R) \) with variance parameter \( R > 0 \), and we fix the observed value to \( y = 0 \). The noise variance \( R \) acts as a \emph{smoothing parameter}: as \( R \to 0 \), the posterior concentrates near the zero level set of \( \tilde{G} \), i.e., the boundary of the failure region.

Denoting by \( \rho_0(x) \) the density of $\mu_0$, Bayes’ theorem yields the posterior density
\begin{equation}
    \label{eq:posterior distribution}
    \rho_*(x) = \frac{1}{Z} \exp\!\left(-\frac{1}{2R}\tilde{G}(x)^2\right)\rho_0(x),
\end{equation}
where \( Z \) is the normalising constant. The posterior distribution is in general non-Gaussian, since \( \tilde{G} \) is typically nonlinear and may only be continuous. In the following we assume that the prior density $\rho_0$ is strictly positive
and continuous on $\mathbb X$, and that the limit-state function $G:\mathbb X\to\mathbb R$ is
continuous. Consequently, $\tilde G(x)=\max\{0,G(x)\}$ is also continuous but
not differentiable on the boundary $\{G(x)=0\}$, which motivates the
smoothing procedure described in Section~\ref{subsubsec:smooth}.

In \cite{Wagner21}, a variant of the EnKF was employed to approximate the failure probability \(P_f\) based on this Bayesian reformulation. In the present work, we replace the EnKF by ALDI, whose invariant measure coincides with the smoothed posterior \eqref{eq:posterior distribution}. Unlike the EnKF, which relies on affine ensemble updates and provides a Gaussian approximation, ALDI preserves the full non-Gaussian structure of the posterior and remains consistent in the small-noise and vanishing-smoothing limits.

\subsection{Affine-Invariant Interacting Langevin Dynamics}\label{subsec:ALDI}

In our numerical experiments, we employ ALDI \cite{Nusken20} to sample the posterior distribution associated with the Bayesian inverse problem \eqref{eq:bayesian_inverseproblem}. In general, ALDI samples from a probability density of the form
\[
\rho_*(x) = Z^{-1}\exp(-\Phi(x)),
\]
where \(Z>0\) is a normalisation constant and \(\Phi:\mathbb{R}^d\to\mathbb{R}\) denotes the \emph{potential function}.  
In our setting, the potential function reads
\begin{equation}
    \label{eq:potential_function}
    \Phi(x) = \frac{1}{2R}\,\tilde{G}(x)^2 - \ln \rho_0(x),
\end{equation}
so that the distribution corresponding to $\rho_*$ is absolutely continuous with respect to the prior measure \(\mu_0\), with density proportional to \(\exp(-\tfrac{1}{2R}\tilde{G}(x)^2)\). We assume $\rho_0$ to be strictly positive and smooth enough so that $-\ln \rho_0$ is well-defined and differentiable where needed.

The ALDI method is formulated as an interacting particle system.  
If the potential function \(\Phi\) is smooth, the dynamics of an ensemble of \(J\) particles \(X=(x^{(1)},\ldots,x^{(J)})\) is given by the system of Itô-type stochastic differential equations
\begin{equation}
    \label{eq:ALDI_smooth}
    dx_t^{(j)} 
    = \left[-\mathcal{C}(X_t)\,\nabla_{x^{(j)}} \Phi\!\left(x_t^{(j)}\right)
    + \frac{d+1}{J}\big(x_t^{(j)} - m(X_t)\big)\right] dt
    + \sqrt{2}\,\mathcal{C}^{1/2}(X_t)\,dW_t^{(j)},
\end{equation}
where \(W_t^{(j)}\) are independent \(d\)-dimensional standard Brownian motions.  
Here, \(m(X)\) and \(\mathcal{C}(X)\) denote the empirical mean and covariance of the ensemble,
\begin{equation}
    \label{eq:ALDI_definitions}
    \begin{split}
        m(X) &:= \frac{1}{J}\sum_{j=1}^J x^{(j)}, \\[2mm]
        \mathcal{C}(X) &:= \frac{1}{J}\sum_{j=1}^J \big(x^{(j)}-m(X)\big)\big(x^{(j)}-m(X)\big)^\top, \\[2mm]
        \mathcal{C}^{1/2}(X) &:= \frac{1}{\sqrt{J}}\big(X - m(X)1_J\big),
    \end{split}
\end{equation}
with \(1_J=(1,\ldots,1)^\top \in \mathbb{R}^J\).

Let \(\pi_t^{(J)}\) denote the probability density of \(X_t\).  
The corresponding extended invariant density is
\begin{equation}
    \label{eq:invariant_density}
    \pi_*^{(J)}(x) = \prod_{j=1}^J \rho_*\!\left(x^{(j)}\right).
\end{equation}

\begin{theorem}
\label{THM:ALDI_properties}
Let $\mathbb X=\mathbb{R}^d$. Assume that the initial covariance \(\mathcal{C}(X_0)\) is positive definite, the potential \(\Phi \in C^2(\mathbb{R}^d)\cap L^1(\rho_*)\), and that there exist a compact set \(K \subset \mathbb{R}^d\) and constants \(0 < c_1 < c_2\) such that for all \(x \in \mathbb{R}^d \setminus K\),
\begin{equation}
    \label{eq:conditions_potential}
    \begin{split}
        c_1 \|x\|^2 \le \Phi(x) \le c_2 \|x\|^2, \quad
        c_1 \|x\| \le \|\nabla \Phi(x)\| \le c_2 \|x\|, \quad
        c_1 I_d \le \mathrm{Hess}\,\Phi(x) \le c_2 I_d.
    \end{split}
\end{equation}
Then the ALDI system \eqref{eq:ALDI_smooth} satisfies the following properties:

\smallskip
\noindent
1. \textbf{(Affine invariance)}  
The nonlinear Fokker–Planck equation associated with \eqref{eq:ALDI_smooth},
\begin{equation}
\label{eq:Fokker_Planck_nonlinear}
\partial_t\pi_t = 
\nabla_x \!\cdot\!
\left(
\pi_t\,C(\pi_t)\,\nabla_x
\frac{\partial \mathrm{KL}(\pi_t\Vert \rho_*)}{\partial \pi_t}
\right),
\quad
C(\pi_t) = \mathbb{E}_{\pi_t}\!\left[(x-\mu_t)(x-\mu_t)^\top\right],
\end{equation}
is affine invariant.

\smallskip
\noindent
2. \textbf{(Invariant distribution)}  
The product measure \(d\mu(x)=\pi_*^{(J)}(x)\,dx\) is invariant for the particle system \eqref{eq:ALDI_smooth}.

\smallskip
\noindent
3. \textbf{(Global strong solution)}  
The system \eqref{eq:ALDI_smooth} admits a unique global strong solution \(X_t\), and the empirical covariance \(\mathcal{C}(X_t)\) remains strictly positive definite almost surely.

\smallskip
\noindent
4. \textbf{(Ergodicity)}  
If \(J > d+1\), then \(\pi_t^{(J)} \to \pi_*^{(J)}\) in total variation distance as \(t \to \infty\).

\end{theorem}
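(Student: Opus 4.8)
\emph{Affine invariance.} The four assertions essentially collect and adapt results of \cite{Nusken20} to the potential \eqref{eq:potential_function}; the only genuinely new point is that this potential---in its smoothed form, see Section~\ref{subsubsec:smooth}---satisfies the structural hypotheses \eqref{eq:conditions_potential}, so the plan is to reduce each claim to a standard argument and flag the adaptations. For affine invariance, the plan is to work with the mean-field limit of \eqref{eq:ALDI_smooth}, whose law solves \eqref{eq:Fokker_Planck_nonlinear}. Given an invertible affine map $T(x)=Ax+b$, push forward both the target, $\rho_*\mapsto T_\#\rho_*$, and the flow, $\pi_t\mapsto T_\#\pi_t$; one then checks three equivariances: the covariance transforms as $C(T_\#\pi)=A\,C(\pi)\,A^\top$, the first variation of the relative entropy is invariant under simultaneous pushforward up to an additive constant so that its $x$-gradient picks up a factor $A^{-\top}$, and hence the combination $C(\pi)\nabla_x(\cdot)$ is equivariant. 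Substituting into \eqref{eq:Fokker_Planck_nonlinear} and using the chain rule for the divergence shows that $T_\#\pi_t$ solves the same equation with target $T_\#\rho_*$, which is the claim. The computation is routine; the only thing to monitor for the finite-$J$ system is that the correction $\tfrac{d+1}{J}(x^{(j)}-m(X))$ is itself affine equivariant, which is exactly what makes the particle system---not merely its mean field---affine invariant.

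\emph{Invariant distribution.} The plan is a direct verification that $\pi_*^{(J)}$ in \eqref{eq:invariant_density} is a stationary solution of the Fokker--Planck equation of the $J$-particle system \eqref{eq:ALDI_smooth}, by exhibiting a vanishing stationary probability flux. Writing the diffusion matrix of the system in block form (particle $j$ carries noise covariance $2\,\mathcal{C}(X)$ by \eqref{eq:ALDI_definitions}), one computes its divergence in the $x^{(k)}$ variables; the key identity, as in \cite{Nusken20}, is that this divergence equals precisely $\tfrac{d+1}{J}(x^{(j)}-m(X))$ for each $j$, so that the state dependence of the noise produces a spurious drift that is cancelled term by term by the correction in \eqref{eq:ALDI_smooth}. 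After this cancellation the stationary flux reduces to that of $J$ decoupled overdamped Langevin diffusions with potential $\Phi$, each of which annihilates $\rho_*$; hence the product $\pi_*^{(J)}=\prod_j\rho_*(x^{(j)})$ is invariant. Strict positivity of $\rho_0$ together with $\Phi\in L^1(\rho_*)$ ensures that $\rho_*$ is a bona fide probability density.

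\emph{Global strong solution and positivity of the covariance.} Since $\Phi\in C^2(\mathbb{R}^d)$, the drift and diffusion coefficients of \eqref{eq:ALDI_smooth} are locally Lipschitz on the open set $\mathcal{O}=\{X:\ \mathcal{C}(X)\succ0\}$, so there is a unique local strong solution up to $\tau=\inf\{t:\ \|X_t\|\to\infty\ \text{ or }\ \lambda_{\min}(\mathcal{C}(X_t))\to0\}$. To exclude explosion, the plan is to apply the generator to $V(X)=\sum_{j=1}^J(1+\|x^{(j)}\|^2)$ and, using the quadratic bounds on $\Phi$ and $\nabla\Phi$ in \eqref{eq:conditions_potential} outside $K$ together with boundedness of the coefficients on $K$, obtain $\mathcal{L}V\le a+bV$; Gr\"onwall's lemma then rules out finite-time blow-up. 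To exclude collapse of the ensemble, the plan is to derive an It\^o equation for $\det\mathcal{C}(X_t)$ (or for $\lambda_{\min}(\mathcal{C}(X_t))$) and show, as in \cite{Nusken20}, that the noise acting on the ensemble spread keeps it bounded away from $0$ on finite time intervals---equivalently, one passes to affine-invariant shape coordinates in which the spread evolves by an autonomous non-degenerate diffusion. Combining the two estimates yields $\tau=\infty$ almost surely, and in particular $\mathcal{C}(X_t)\succ0$ for all $t$.

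\emph{Ergodicity.} Assume now $J>d+1$. By the previous step the process remains in the open connected set $\mathcal{O}$, on which the diffusion matrix $2\,I_J\otimes\mathcal{C}(X)$ is non-degenerate and locally uniformly elliptic; hence the transition semigroup is strong Feller and, by this non-degeneracy together with connectedness of $\mathcal{O}$, irreducible. Sharpening the Lyapunov estimate of the previous step by means of the lower bound $c_1\|x\|^2\le\Phi(x)$ in \eqref{eq:conditions_potential} gives a drift condition $\mathcal{L}V\le a-cV$ outside a compact set. Irreducibility, the strong Feller property, and this drift condition let one invoke Harris' theorem (Meyn--Tweedie) to conclude that $\pi_t^{(J)}$ converges geometrically, hence in particular in total variation, to the unique invariant law $\pi_*^{(J)}$ identified above. \emph{Main obstacle.} Affine invariance and the stationarity computation are routine. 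The delicate point is the non-explosion analysis of the third step, specifically showing that the empirical covariance cannot degenerate in finite time, which requires a careful control of the ensemble-spread dynamics; this is also what underpins the ellipticity used in the ergodicity argument. Finally, for the application to \eqref{eq:potential_function} one must note that $\tilde G$ is only continuous, so the theorem---and this proof---is to be applied to the smoothed potential of Section~\ref{subsubsec:smooth}, for which \eqref{eq:conditions_potential} has to be checked directly from the assumptions on $\rho_0$ and $G$.
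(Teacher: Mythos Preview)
Your sketch is essentially correct and follows the arguments of \cite{Nusken20}. Note, however, that the paper's own proof is purely a citation: it simply points to Lemma~2.4, Corollary~4.2, Proposition~4.4, Proposition~4.5, and Lemma~4.7 of \cite{Nusken20} and provides no argument at all. In that sense you have done considerably more than the paper---you have reconstructed the outline of the proofs in \cite{Nusken20} rather than merely invoking them.

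One framing point is slightly off. You write that ``the only genuinely new point is that this potential \ldots\ satisfies the structural hypotheses \eqref{eq:conditions_potential}'', but Theorem~\ref{THM:ALDI_properties} as stated is abstract: it \emph{assumes} \eqref{eq:conditions_potential} for a generic $\Phi$ and says nothing about the rare-event potential \eqref{eq:potential_function}. Verifying \eqref{eq:conditions_potential} for the smoothed potential is an application issue handled later (and in fact not carried out explicitly in the paper), not part of the theorem's proof. So your closing paragraph, while a useful remark, does not belong to the proof of this statement.
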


\begin{proof}
We refer to Lemma~2.4, Corollary~4.2, Proposition~4.4, Proposition~4.5, and Lemma~4.7 in \cite{Nusken20}.
\end{proof}

\subsubsection{A smoothing of the limit-state function}\label{subsubsec:smooth}
As mentioned earlier, our problem involves a potential $\Phi$ that is nonsmooth. In order to exploit the theoretical aspects of the ALDI method \eqref{eq:ALDI_smooth} as given in Theorem~\ref{THM:ALDI_properties}, we introduce a smooth approximation of $\tilde{G}$ which coincides with it for any value of $G(x)$ outside a small interval $(0,\delta)$. Consider the function
\begin{equation}
\label{eq:psi_base}
\psi_\delta(x):=\begin{cases}
\exp\left(\frac{1}{\delta^2}\right)\cdot \exp\left(-\frac{1}{x^2}\right), & \textup{if }
x> 0\\
0, & \textup{otherwise}, \end{cases}
\end{equation}
and the ramp function
\begin{equation}
    \label{eq:ramp_function}
\phi_\delta(x):=\frac{\psi_\delta(x)}{\psi_\delta(x)+ \psi_\delta(\delta-x)},
\end{equation}
which approximates smoothly the indicator function $\mathbf{1}_{[0,\infty)}$. Therefore, a smooth approximation of $\tilde{G}$ is given by
\begin{equation}
    \label{eq:smooth_G}
\tilde G_\delta(x) :=
\begin{cases}
  0, & \text{if } G(x) < 0,\\[0.3em]
  G(x)\dfrac{\psi_\delta(G(x))}{\psi_\delta(G(x))+\psi_\delta(\delta-G(x))}, & \text{if } G(x)\in[0,\delta],\\[0.7em]
  G(x), & \text{if } G(x) > \delta.
\end{cases}
\end{equation}
Note that $\tilde G_\delta(x)\to\tilde G(x)$ uniformly as $\delta\to0$, and the zero level set $\{\tilde{G}=0\}$ is preserved for all $\delta\geq0$.

\subsubsection{Consistency of the smoothed Posterior and ALDI}
\label{SUBSUBSEC:consistency}
We now establish that the smoothed posterior distribution underlying the ALDI dynamics 
converges, in the joint limit of vanishing observation noise and vanishing smoothing, 
to the prior distribution restricted to the rare-event domain.

\begin{theorem}[Consistency of the smoothed posterior]
\label{thm:consistency}
Let $F=\{x\in\mathbb X : G(x)\le 0\}$ be the failure domain and assume that $P_f=\mu_0(F)>0$. Let $\tilde G_\delta$ be the smoothed approximation defined in
\eqref{eq:smooth_G} and, for each $R>0$, define the probability measures
\begin{equation}\label{eq:smoothedpost}
  \mu_{\delta,R}(dx)
  := \frac{1}{Z_{\delta,R}}
     \exp\!\left(-\frac{\tilde G_\delta(x)^2}{2R}\right)\rho_0(x)\,dx,
  \quad  
  Z_{\delta,R}
  := \int_{\mathbb X} \exp\!\left(-\frac{\tilde G_\delta(x)^2}{2R}\right)
      \rho_0(x)\,dx.
\end{equation}
Then, for every fixed $\delta>0$ we have $\mu_{\delta,R}\;\xrightarrow{\mathrm{TV}}\;\mu_F$ as $R\rightarrow 0 $,
where the distribution $\mu_F(dx)=\frac{\mathbf{1}_F(x)}{\mu_0(F)}\rho_0(x)dx$ is independent of~$\delta$. 
\end{theorem}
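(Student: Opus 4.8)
The plan is to reduce the statement to a dominated-convergence argument (equivalently, an application of Scheff\'e's lemma) once the pointwise limit of the un-normalised densities has been identified. Write $f_R(x):=\exp\!\big(-\tilde G_\delta(x)^2/(2R)\big)$, so that $\mu_{\delta,R}$ has density $f_R/Z_{\delta,R}$ with respect to $\mu_0$, while $\mu_F$ has density $\mathbf 1_F/P_f$ with respect to $\mu_0$. The first step is to check that, for each fixed $\delta>0$, the zero set of $\tilde G_\delta$ is exactly $F$: from \eqref{eq:smooth_G} one has $\tilde G_\delta(x)=0$ whenever $G(x)\le 0$ (including the boundary $\{G(x)=0\}$, since there the ramp factor multiplies $G(x)=0$), while for $G(x)\in(0,\delta]$ the ramp factor $\phi_\delta(G(x))$ is strictly positive and for $G(x)>\delta$ one has $\tilde G_\delta(x)=G(x)>0$; hence $\tilde G_\delta(x)>0$ precisely on $F^{c}$. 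Consequently $f_R\equiv 1$ on $F$ and $f_R(x)\to 0$ for every $x\in F^{c}$, i.e. $f_R\to\mathbf 1_F$ pointwise on $\mathbb X$, with $0\le f_R\le 1$ throughout.

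The second step is to pass this convergence through the normalisation. Since $\mu_0$ is a probability measure, the constant $1$ dominates $f_R$ in $L^1(\mu_0)$, so dominated convergence yields $Z_{\delta,R}=\int_{\mathbb X}f_R\,d\mu_0\to\mu_0(F)=P_f$, which is strictly positive by assumption; in particular $Z_{\delta,R}>0$ for every $R>0$, so the functions $f_R/Z_{\delta,R}$ are well-defined probability densities with respect to $\mu_0$, and they converge pointwise to $\mathbf 1_F/P_f$, the density of $\mu_F$. Dominated convergence (applied to $|f_R-\mathbf 1_F|\le 1$) also gives $\|f_R-\mathbf 1_F\|_{L^1(\mu_0)}\to 0$.

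The third step is to conclude. Using the identity $\|\mu_{\delta,R}-\mu_F\|_{TV}=\tfrac12\|f_R/Z_{\delta,R}-\mathbf 1_F/P_f\|_{L^1(\mu_0)}$ recalled in Section~\ref{SEC:preliminaries}, it suffices to bound
\[
\Big\|\frac{f_R}{Z_{\delta,R}}-\frac{\mathbf 1_F}{P_f}\Big\|_{L^1(\mu_0)}
\le \Big|\frac{1}{Z_{\delta,R}}-\frac{1}{P_f}\Big|\,Z_{\delta,R}
+\frac{1}{P_f}\,\|f_R-\mathbf 1_F\|_{L^1(\mu_0)},
\]
where the first term tends to $0$ because $Z_{\delta,R}\to P_f>0$ and the second by the previous step; alternatively one may invoke Scheff\'e's lemma directly for the probability densities $f_R/Z_{\delta,R}$. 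Since the limit density $\mathbf 1_F/P_f$ does not involve $\delta$, the limit measure $\mu_F$ is independent of $\delta$. I do not expect any serious obstacle here; the only point genuinely needing care is the verification that $\{\tilde G_\delta=0\}=F$ for each fixed $\delta>0$, since this is what forces the pointwise limit of $f_R$ to equal $\mathbf 1_F$ and thereby makes the conclusion $\delta$-independent.
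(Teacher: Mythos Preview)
Your proposal is correct and follows essentially the same approach as the paper's own proof: both identify the pointwise limit $f_R\to\mathbf 1_F$ via the key fact $\{\tilde G_\delta=0\}=F$, use dominated convergence to obtain $Z_{\delta,R}\to P_f$ and $L^1(\mu_0)$-convergence of the (un)normalised densities, and then pass to total variation. Your treatment is in fact slightly more careful than the paper's in explicitly verifying that $\tilde G_\delta>0$ on $F^c$ (the paper simply invokes this from the construction), and your final bound via the triangle inequality is an equivalent repackaging of the paper's splitting over $F$ and $\mathbb X\setminus F$.
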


\begin{proof}

Consider $\delta>0$ fixed. First, we show that the normalisation constants $Z_{\delta,R}$ converge to $\mu_0(F)$. Indeed, recall from \eqref{eq:smooth_G} that $\tilde{G}_\delta(x)=0$ whenever $x\in F$, for all $\delta\geq 0$. By splitting the state space $\mathbb{X}=F \cup (\mathbb X \setminus F)$, it follows from \eqref{eq:smoothedpost} that
\[
    Z_{\delta,R} = \mu_0(F) + \int_{\mathbb X \setminus F} \exp\left( -\frac{\tilde{G}_\delta(x^2)}{2R} \right) \rho_0(x)dx.
\]
The integrand on the right hand side is bounded by $\rho_0(x)$, and $\exp(-\tilde{G}(x)^2/(2R))\rightarrow 0$ for each $x\in \mathbb{X}\setminus F$. Hence, by the dominated convergence theorem we obtain that
\begin{equation}
    \label{eq:limit_normalisation}
        \lim_{R\rightarrow 0} Z_{\delta,R}=\mu_0(F).
\end{equation}

Denote $f_{\delta,R}(x):=\frac{\exp\left(- \tilde{G}(x)^2/Z_{\delta,R}\right)}{2R}\equiv \frac{N_{\delta,R}(x)}{Z_{\delta,R}}$ the density of $\mu_{\delta,R}$ with respect to $\mu_0$. We show that $f_{\delta,R}\rightarrow f_F(x):= \frac{1}{\mu_0(F)}\mathbf{1}_F(x)$ in $L^1(\mu_0)$. Indeed, by splitting again the state space and recalling that $\tilde G(x)=0$ for $x\in F$, we get
\begin{align*}
    \int_{\mathbb X}\vert f_{\delta,R}-f_F \vert d\mu_0 & = \int_F \left\vert \frac{N_{\delta,R}(x)}{Z_{\delta,R}} - \frac{1}{\mu_0(F)}\right\vert d\mu_0(x) + \int_{\mathbb X \setminus F} \frac{N_{\delta, R}(x)}{Z_{\delta,R}}d\mu_0(x)= \\
    &= \frac{\left\vert Z_{\delta,R}-\mu_0(F) \right\vert}{Z_{\delta,R}}+ \frac{1}{Z_{\delta,R}}\int_{\mathbb{X}\setminus F}N_{\delta,R}(x)d\mu_0(x).
\end{align*}
As $R\rightarrow 0$, the first term on the right hand side vanishes as given in \eqref{eq:limit_normalisation}. For the second term, observe that $N_{\delta,R}\rightarrow \mathbf{1}_F$ pointwise and that $N_{\delta,R}\leq 1 \in L^1(\mu_0)$, so that the integral on the right hand side vanishes as $R\rightarrow 0$ due to the dominated convergence theorem. 

Since the densities $f_{\delta,R}$ converge to $f_F$ in $L^1(\mu_0)$, their corresponding measures $\mu_{\delta,R}$ converge to $\mu_F$ in total variation as they are all absolutely continuous with respect to $\mu_0$, and the result follows.
\end{proof}

Theorem~\ref{THM:ALDI_properties} can be translated in terms of the ALDI method as given in the following statement.

\begin{corollary}[Consistency of ALDI]
For every fixed smoothing parameter $\delta>0$ and every noise level $R>0$,
the ALDI dynamics \eqref{eq:ALDI_smooth} admits the invariant product measure
\[
\pi^{(J),\delta,R}_{*}(x) \;\propto\; 
\prod_{j=1}^J 
\exp\!\left(-\frac{ G_\delta(x^{(j)})^2 }{2R}\right)\rho_0(x^{(j)}),
\]
and is ergodic whenever $J>d+1$ (see Theorem~\ref{THM:ALDI_properties}).  
If the dynamics is run to stationarity, each particle satisfies
\[
X^{(j)}_{\delta,R}\sim \rho^*_{\delta,R},
\]
where $\rho^*_{\delta,R}$ is the smoothed posterior defined in
\eqref{eq:smoothedpost}.

Moreover, by Theorem~\ref{thm:consistency}, for every fixed $\delta>0$,
\[
\rho^*_{\delta,R} 
\;\xrightarrow[R\to 0]{\rm TV}\;
\mu_F ,
\qquad 
F=\{x : G(x)\le 0\},
\]
and the limit measure $\mu_F$ is independent of $\delta$.
Consequently, the stationary ALDI estimator is consistent for the
rare-event distribution: for each ensemble index $j$,
\[
X^{(j)}_{\delta,R}
\;\xrightarrow[R\to 0]{\rm law}\;
\mu_F .
\]
\end{corollary}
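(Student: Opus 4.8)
The plan is to chain together the abstract ALDI theory of Theorem~\ref{THM:ALDI_properties}, the product structure of its invariant measure, and the small-noise limit of Theorem~\ref{thm:consistency}, in three steps. \textbf{Step 1: regularity and coercivity of the smoothed potential.} Fix $\delta,R>0$ and set $\Phi_{\delta,R}(x):=\tilde G_\delta(x)^2/(2R)-\ln\rho_0(x)$. First I would record that, writing $\tilde G_\delta=h_\delta\circ G$ with $h_\delta$ the scalar profile appearing in \eqref{eq:smooth_G}, the function $h_\delta$ is $C^\infty$ on $\mathbb R$ --- the ramp $\phi_\delta$ is built from the $C^\infty$ bump $\psi_\delta$, and all one-sided derivatives of $t\mapsto t\,\phi_\delta(t)$ vanish at $t=0$ and match those of $t\mapsto t$ at $t=\delta$ --- so $\tilde G_\delta\in C^2$ whenever $G\in C^2$, hence $\Phi_{\delta,R}\in C^2(\mathbb R^d)$ under the standing assumption that $\rho_0$ is $C^2$ and strictly positive. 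The substantive point is the two-sided sandwich \eqref{eq:conditions_potential}: since $\tilde G_\delta^2/(2R)\ge 0$, the lower bounds on $\Phi_{\delta,R}$, $\|\nabla\Phi_{\delta,R}\|$, and $\mathrm{Hess}\,\Phi_{\delta,R}$ follow from a Gaussian-type coercivity assumption on the prior ($-\ln\rho_0$ and its first two derivatives bounded below by $c_1\|x\|^2$, $c_1\|x\|$, $c_1 I_d$ outside a compact set), whereas the matching upper bounds additionally require $G$ to grow at most linearly with bounded first and second derivatives outside a compact set, so that $\tilde G_\delta(x)^2/(2R)=O(\|x\|^2)$ with controlled gradient and Hessian; in particular $\Phi_{\delta,R}\in L^1(\rho^*_{\delta,R})$, a quadratically growing function being integrable against a measure with Gaussian-type tails. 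These structural hypotheses on $(G,\rho_0)$ should be stated explicitly; they are satisfied in the benchmarks of Section~\ref{SEC:numerics}.

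\textbf{Step 2: invariance, ergodicity, and the marginal law.} Granting Step~1 and assuming, as in Theorem~\ref{THM:ALDI_properties}, that $\mathcal C(X_0)$ is positive definite, item~2 of that theorem yields that the product measure $\pi^{(J),\delta,R}_*(x)=\prod_{j=1}^J\rho^*_{\delta,R}(x^{(j)})$ is invariant for \eqref{eq:ALDI_smooth}, and item~4 gives $\pi_t^{(J)}\to\pi^{(J),\delta,R}_*$ in total variation whenever $J>d+1$. Projecting onto the $j$-th coordinate and using that total variation distance does not increase under pushforwards, the law of $X^{(j)}_t$ converges in total variation to the $j$-th marginal of $\pi^{(J),\delta,R}_*$, which equals $\rho^*_{\delta,R}$ by the product structure; in particular, run to stationarity, $X^{(j)}_{\delta,R}\sim\rho^*_{\delta,R}$.

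\textbf{Step 3: the small-noise limit.} Fix $\delta>0$ and let $R\to 0$. Theorem~\ref{thm:consistency} gives $\rho^*_{\delta,R}\xrightarrow{\mathrm{TV}}\mu_F$ with $\mu_F$ independent of $\delta$, and since total-variation convergence implies weak convergence, the stationary ensemble members satisfy $X^{(j)}_{\delta,R}\xrightarrow{\mathrm{law}}\mu_F$; testing against $\mathbf 1_F$ (equivalently, using $Z_{\delta,R}\to\mu_0(F)$ from the proof of Theorem~\ref{thm:consistency}) then yields consistency of the self-normalised estimator of $P_f$. The main obstacle will be Step~1: verifying the uniform quadratic bounds \eqref{eq:conditions_potential} for the composite potential $\tilde G_\delta^2/(2R)-\ln\rho_0$, which is precisely what forces the extra growth and coercivity assumptions on $G$ and $\rho_0$. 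Steps~2 and~3 are then essentially bookkeeping --- marginalisation of a total-variation bound and the elementary implication $\mathrm{TV}\Rightarrow$ weak convergence.
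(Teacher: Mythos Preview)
Your proposal is correct and follows the same route as the paper, which does not give a separate proof at all: the corollary is stated as an immediate consequence of Theorem~\ref{THM:ALDI_properties} (for invariance and ergodicity) and Theorem~\ref{thm:consistency} (for the small-noise limit), with the argument already embedded in the statement and in the remark following it. Your Steps~2 and~3 are exactly this chaining, made explicit via marginalisation of the product measure and the implication $\mathrm{TV}\Rightarrow$ weak convergence.

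Where you go beyond the paper is Step~1. The paper tacitly assumes that the smoothed potential $\Phi_{\delta,R}=\tilde G_\delta^2/(2R)-\ln\rho_0$ satisfies the two-sided quadratic bounds \eqref{eq:conditions_potential} of Theorem~\ref{THM:ALDI_properties}, without spelling out the growth and regularity conditions on $G$ and $\rho_0$ this requires. You correctly flag this as the only non-trivial step and identify the needed structural hypotheses (at-most-linear growth of $G$ with bounded derivatives, Gaussian-type coercivity of $-\ln\rho_0$). This is a genuine addition of rigour rather than a different approach; the paper simply leaves this verification implicit.
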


\begin{remark}
The proof of Theorem~~\ref{thm:consistency} relies only on the continuity of $G$, 
the property that $\tilde G_\delta(x)=0$ if and only if $G(x)\le 0$.
In particular, for every fixed $\delta>0$ we have
\[
\mu_{\delta,R} \xrightarrow[R\to 0]{\mathrm{TV}} \mu_F,
\]
and the limit measure $\mu_F$ is independent of $\delta$. 
Thus, $\delta$ acts purely as a smoothing parameter that improves the 
regularity of the potential but does not affect the limiting conditional
distribution as $R\to 0$. 

The result itself is formulated at the level of the target measures 
$\mu_{\delta,R}$ and is therefore independent of the particular sampling 
scheme. In particular, any algorithm whose invariant distribution is 
$\mu_{\delta,R}$ (such as the gradient-based ALDI dynamics) inherits the same consistency property.
\end{remark}

 \subsection{Importance sampling for the probability of failure}
\label{subsec:IS}

Recall that our goal is to estimate the probability of failure
\[
  P_f = \mu_0(F)
      = \int_X \mathbf{1}_{\{G(x)\le 0\}}\,\rho_0(x)\,dx,
  \qquad F := \{x\in X : G(x)\le 0\}.
\]
A standard approach is IS, which rewrites
\[
  P_f
  = \int_F \frac{\rho_0(x)}{q(x)}\, q(x)\,dx,
\]
where $q$ is a proposal density. The usual global support condition
$\operatorname{supp}(\rho_0)\subseteq \operatorname{supp}(q)$ is not required here.
Since the integrand contains $\mathbf{1}_{F}$, the correct condition is only
\begin{equation}
  \label{eq:support-cond-F}
  \operatorname{supp}\bigl(\mathbf{1}_{F}\rho_0\bigr)
    \subseteq \operatorname{supp}(q),
\end{equation}
i.e., $q(x)>0$ for all $x\in F$ with $\rho_0(x)>0$.
This is naturally satisfied for the constructions below. A central difficulty in importance sampling is \emph{weight degeneracy}, where only a small number of samples carry non-negligible weights due to a mismatch between the proposal distribution and the rare-event region, resulting in large estimator variance and a severely reduced effective sample size. In particular, importance sampling based on the prior as proposal is highly prone to weight degeneracy in rare-event regimes, whereas ALDI-based proposals concentrate mass near the failure set and therefore yield substantially more balanced weights.

We consider two IS strategies based on stationary samples from
$\rho^*_{\delta,R}$, an auxiliary smoothed density used to bias sampling
towards the failure set.
Both methods are consistent for estimating the failure probability $P_f$,
but differ in how the proposal distribution is constructed and exploited.

\paragraph{Variant~1: Product estimator based on ALDI samples.}
Assume that the samples $\{X_m\}_{m=1}^M$ are obtained from ALDI at stationarity,
so that $X_m\sim \rho^*_{\delta,R}$, where
\[
  \rho^*_{\delta,R}(x)
  = \frac{1}{Z^*_{\delta,R}}\,
    \exp\!\Bigl(-\tfrac{1}{2R}\tilde G_\delta(x)^2\Bigr)\,\rho_0(x),
\]
and
\[
  Z^*_{\delta,R}
  := \int \exp\!\Bigl(-\tfrac{1}{2R}\tilde G_\delta(x)^2\Bigr)\,\rho_0(x)\,dx .
\]

Since $\tilde G_\delta(x)=0$ for all $x\in F=\{G(x)\le 0\}$, the failure
probability can be written as
\[
P_f
= \int_F \rho_0(x)\,dx
= Z^*_{\delta,R}\,
  \mathbb P_{\rho^*_{\delta,R}}(X\in F).
\]
This identity motivates the product estimator
\[
  \widehat P_f
  = \widehat Z^*_{\delta,R}\,
    \widehat p^*,
  \qquad
  \widehat p^*
  := \frac{1}{M}\sum_{m=1}^M
      \mathbf{1}_{\{G(X_m)\le 0\}},
\]
where $\widehat p^*$ is the empirical probability of the failure event under
$\rho^*_{\delta,R}$, and $\widehat Z^*_{\delta,R}$ is an estimator of the
normalisation constant. Since $\rho^*_{\delta,R}(x)=\frac{1}{Z^*_{\delta,R}}\exp\!\big(-\tfrac{1}{2R}\tilde G_\delta(x)^2\big)\,\rho_0(x)$, we can rewrite
\[
\rho_0(x)=Z^*_{\delta,R}\,\exp\!\big(\tfrac{1}{2R}\tilde G_\delta(x)^2\big)\,\rho^*_{\delta,R}(x).
\]
Integrating both sides over $x$ yields the inverse-moment identity
\[
1 = Z^*_{\delta,R}\,\mathbb E_{\rho^*_{\delta,R}}\!\left[\exp\!\big(\tfrac{1}{2R}\tilde G_\delta(X)^2\big)\right],
\qquad\Rightarrow\qquad
Z^*_{\delta,R}=\left(\mathbb E_{\rho^*_{\delta,R}}\!\left[\exp\!\big(\tfrac{1}{2R}\tilde G_\delta(X)^2\big)\right]\right)^{-1},
\]
which motivates the Monte--Carlo approximation in Alg.~2.1 by replacing the expectation with an empirical average over stationary ALDI samples.

\begin{algorithm}[t]
\caption{ALDI-based product estimator for $P_f$}
\label{alg:aldi_product}
\begin{algorithmic}[1]
\Require prior $\rho_0$, limit-state $G$, smoothing parameters $\delta,R$,
         ALDI integrator, ensemble size $J$, sample size $M$
\Ensure estimate $\widehat P_f$
\State initialise ALDI ensemble
\State run ALDI (possibly with annealing) towards $\rho^*_{\delta,R}$
\State collect $M$ approximately independent samples $\{X_m\}$
\State compute
       $\displaystyle
        \widehat p^*=
        \frac{1}{M}\sum_{m=1}^M \mathbf{1}_{\{G(X_m)\le 0\}}$
\State estimate normalisation constant $\widehat Z^*_{\delta,R}
=
\left(
\frac{1}{M}\sum_{m=1}^M
\exp\!\Bigl(\tfrac{1}{2R}\tilde G_\delta(X_m)^2\Bigr)
\right)^{-1}.
$
       
\State compute $\widehat P_f=\widehat Z^*_{\delta,R}\,\widehat p^*$
\State \Return $\widehat P_f$
\end{algorithmic}
\end{algorithm}

\begin{remark}
Alg.~\ref{alg:aldi_product} is included for conceptual completeness. We will see in the numerical experiments that the normalisation estimator is numerically unstable, as given in Figure~\ref{fig:size_and_failed}(a).
\end{remark}

\paragraph{Variant~2: Mixture-based IS using a fitted proposal.}
To increase the accuracy of the estimator to a prescribed tolerance, one
typically needs to generate a large number of independent samples from the
proposal distribution.  
To make this feasible, we fit a flexible proposal
$q_\theta$, e.g.\ a Gaussian mixture, to a set of ALDI samples
and subsequently perform IS with respect to $q_\theta$. Close to stationarity, the resulting mixture will satisfy the support condition~\eqref{eq:support-cond-F}, since
$\rho^*_{\delta,R}$ concentrates near the failure set $F$ for small
$(\delta,R)$ and assigns positive mass to all regions in $F$ where
$\rho_0$ is positive.  

Given $X_m\sim q_\theta$, the (unnormalised) IS weights are
\[
  w_m := \frac{\rho_0(X_m)}{q_\theta(X_m)},
\]
and the resulting self-normalised estimator is again
\[
  \widehat P_f
  = \frac{\sum_{m=1}^M\mathbf{1}_{\{G(X_m)\le 0\}}\,w_m}
         {\sum_{m=1}^M w_m}.
\]
This variant has the additional advantage that, once the proposal $q_\theta$ has
been fitted, an arbitrary number of independent samples can be generated at very
low cost. This enables the (self-normalized) IS estimator to achieve any prescribed accuracy
by drawing sufficiently many proposal samples. However, additional forward solves are thus needed to evaluate the IS estimator. 
Moreover, the separation between the ALDI phase (which generates the fitting
data) and the IS phase (which uses $q_\theta$) suggests that the overall work
can be balanced: ALDI does not need to be run until full convergence. As soon as
the ALDI cloud captures the relevant geometry of the failure region, the
subsequent IS step can be made as accurate as desired by increasing the number
of independent samples from $q_\theta$. This opens up the possibility of stopping
ALDI earlier while still reaching a target accuracy for $P_f$ at reduced
computational cost.

\begin{algorithm}[t]
\caption{Mixture-based IS fitted to ALDI samples}
\label{alg:aldi_mixture_is}
\begin{algorithmic}[1]
\Require prior $\rho_0$, limit-state $G$, smoothing parameters $\delta,R$,
         ALDI integrator, ensemble size $J$,
         fitting sample size $M_{\mathrm{fit}}$, IS sample size $M$,
         mixture size $K$
\Ensure estimate $\widehat P_f$ and effective sample size $\mathrm{ESS}$
\State run ALDI to stationarity and collect $M_{\mathrm{fit}}$ samples
       $\{X^{\mathrm{ALDI}}_m\}$
\State fit a parametric proposal $q_\theta$
       (e.g.\ a $K$-component Gaussian mixture) to the ALDI cloud
\State sample $X_1,\dots,X_M\sim q_\theta$
\For{$m=1,\dots,M$}
  \State $I_m:=\mathbf{1}_{\{G(X_m)\le 0\}}$
  \State $w_m:=\rho_0(X_m)/q_\theta(X_m)$
\EndFor
\State estimate
       $\displaystyle
        \widehat P_f=
        \frac{\sum_{m=1}^M I_m w_m}{\sum_{m=1}^M w_m}$
\State \Return $\widehat P_f$
\end{algorithmic}
\end{algorithm}

\subsection{Gradient-free ALDI method}
As an alternative formulation, a gradient-free version of the ALDI method can be used \cite{Nusken20}. One considers the sample cross-correlation matrix $\mathcal{D}(U)$ defined as
\begin{equation}
    \label{eq:crosscorrelation_matrix}
\mathcal{D}(X):= \frac{1}{J}\sum_{j=1}^J \left(x^{(j)}-m(X)\right) \left( \mathcal{G}(x^{(j)})-m(\mathcal{G}(X)) \right)^\top,
\end{equation}
where
\begin{equation}
    \label{eq:sample_mean2}
m(\mathcal{G}(X)):= \frac{1}{J}\sum_{j=1}^J \mathcal{G}\left(x^{(j)} \right),
\end{equation}
so that the \textit{gradient-free ALDI method} reads as
\begin{equation}
    \label{eq:ALDI_gradientfree}
\begin{split}
    dx^{(j)}_t & = -\left[ \mathcal{D}(X_t) R^{-1}\left( \mathcal{G}\left( x_t^{(j)} \right) -y_{obs}\right) +\mathcal{C}(X_t)P_0^{-1}\left(  x_t^{(j)}-m_0\right) \right]dt \\
    & \qquad + \frac{d+1}{J}\left(  x_t^{(j)}-m(X_t)\right)dt + \sqrt{2} \mathcal{C}^{1/2}(X_t)dW^{(j)}_t,
\end{split}
\end{equation}

\begin{remark}
    In our case, the forward map $\mathcal{G}=\tilde{G}$ and the observational noise $\eta$ are real-valued. Moreover, $y_{obs}=0$, which simplifies the computations. In fact,
\begin{equation}        \label{eq:ALDI_gradientfree_particular}
\begin{split}
dx_t^{(j)} & = -\left[ \frac{1}{R}\mathcal{D}(X_t)\tilde{G}\left(x_t^{(j)}\right) + \frac{1}{P_0}\mathcal{C}(X_t)\left( x_t^{(j)}-m_0\right) + \frac{d+1}{J}\left( x_t^{(j)}-m(X_t) \right)\right] dt \\
 & \qquad \qquad +\sqrt{2} \mathcal{C}^{1/2}(X_t)dW_t^{j}.
\end{split}
\end{equation}
\end{remark}

As shown in \cite{Nusken20}, the system \eqref{eq:ALDI_gradientfree} is also affine invariant, like its exact counterpart \eqref{eq:ALDI_gradientfree}. The drawback, however, is that a theoretical analysis as detailed in Theorem~\ref{THM:ALDI_properties} remains still an open problem.

\section{Illustrative Test Cases}
\label{SEC:numerics}

In this section, we assess the validity of the ALDI method for rare-event estimation by means of three illustrative examples. We first consider a classical example from structural reliability, introduced in
\cite{Katsuki94}. Secondly, we explore a simple linear ordinary differential equation with an unstable equilibrium of saddle type. Finally, we explore a dynamical model of interacting point-vortices.

\subsection{A convex limit-state function}
\label{SUBSEC:convex}
We consider the limit-state function
\begin{equation}
    \label{eq:convex_example}
    G(x_1,x_2)
    = 0.1 (x_1 - x_2)^2 - \frac{1}{\sqrt{2}}(x_1 + x_2) + 2.5,
\end{equation}
defines a convex failure domain.  As shown in \cite{Pappaioannou16}, the
distribution of $(x_1,x_2)$ conditioned on $\{G \le 0\}$ is unimodal {when the prior distribution is standard normal}, making this
a convenient test case for illustrating the behaviour of ALDI. In the following numerical experiments, we fix the smoothing parameter $\delta=0.001$, and the prior distribution $\mu_0\sim \mathcal{N}\left((0,0), Id \right)$. We solve ALDI numerically using the Euler-Maruyama method with a time step $\Delta \tau=0.001$ on an integration time window $\tau=10$. We used $8$ Gaussian components in the importance sampling step.

\paragraph{Numerical results}
We illustrate the behaviour of ALDI on the convex limit--state function
\eqref{eq:convex_example}. Figure~\ref{fig:convex_scatter} displays the final ALDI ensembles for three
ensemble sizes $J\in\{100,1000,10000\}$ (see panels (a), (b), and (c), respectively) and two values of the observational noise
parameter $R\in\{0.1,0.01\}$.
Smaller values of $R$ lead to a stronger concentration of particles near the
failure boundary $\{G=0\}$, while larger values of $R$ result in a more diffuse
distribution dominated by the prior.
Increasing the ensemble size $J$ substantially reduces sampling noise and
reveals the geometry of the target distribution more clearly.
Both gradient-based and gradient-free variants of ALDI exhibit the same
qualitative behaviour, with differences primarily visible for small ensembles. The reference solution is $P_f=4.21\times 10^{-3}$, cf.~\cite{Pappaioannou16} and verified by Monte Carlo.

\begin{figure}[h!]
    \centering
    \begin{overpic}[width=\linewidth]{./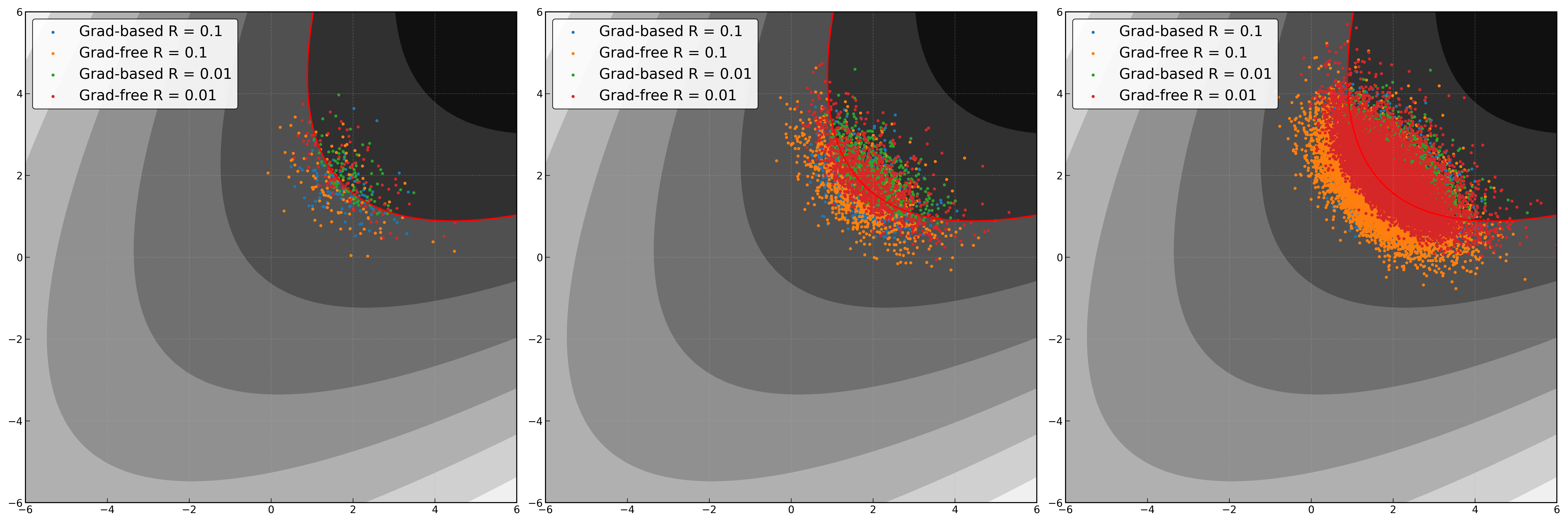}
           \put(15,-2){\footnotesize(a)}
           \put(48,-2){\footnotesize(b)}
           \put(81,-2){\footnotesize(c)}

           \put(15,33){\footnotesize$J=100$}
           \put(48,33){\footnotesize$J=1000$}
           \put(80,33){\footnotesize$J=10000$}
    \end{overpic}
    \caption{Final ALDI ensembles for the convex example.
    Columns correspond to ensemble sizes $J\in\{100,1000,10000\}$.
    Colours indicate gradient-based and gradient-free ALDI for
    $R=0.1$ and $R=0.01$.
    The red curve denotes the failure boundary $G=0$.}
    \label{fig:convex_scatter}
\end{figure}

The effect of the ensemble size $J$ is shown in
Figure~\ref{fig:size_and_failed} as a comparison between Alg.~\ref{alg:aldi_product} and Alg.~\ref{alg:aldi_mixture_is}.  
Although the identity $P_f = Z^*_{\delta,R}\,\mathbb P_{\rho^*_{\delta,R}}(X\in F)$
is exact and motivates Alg.~\ref{alg:aldi_product}, estimating the normalising constant
$Z^*_{\delta,R}$ directly from stationary ALDI samples via
\[
\widehat Z^*_{\delta,R}
=\left(\frac1M\sum_{m=1}^M \exp\!\Big(\tfrac{1}{2R}\tilde G_\delta(X_m)^2\Big)\right)^{-1}
\]
is typically numerically unstable. The
factor $\exp(\tilde G_\delta^2/(2R))$ can be extremely large away from the failure
set, so that rare excursions of the ALDI chain dominate the empirical average and
lead to very high variance. In addition, any deviation from stationarity introduces
bias, since the inverse-moment identity holds only under $\rho^*_{\delta,R}$. Indeed, if the empirical ALDI distribution differs from $\rho^*_{\delta,R}$,
the inverse-moment estimator converges to an expectation under the transient
distribution rather than $(Z_{\delta,R}^{*})^{-1}$. In addition, time discretization introduces a bias in the invariant measure of
the numerical ALDI scheme, which further degrades the accuracy of the normalisation
constant estimator.
For these reasons, we focus in the following on Alg.~\ref{alg:aldi_mixture_is}, which avoids estimating
$Z^*_{\delta,R}$ altogether by working with an explicit fitted proposal.
As expected, the estimation error decreases with increasing $J$.

\begin{figure}[h!]
    \centering
    \begin{subfigure}[t]{0.48\textwidth}
        \centering
        \begin{overpic}[width=\textwidth]{./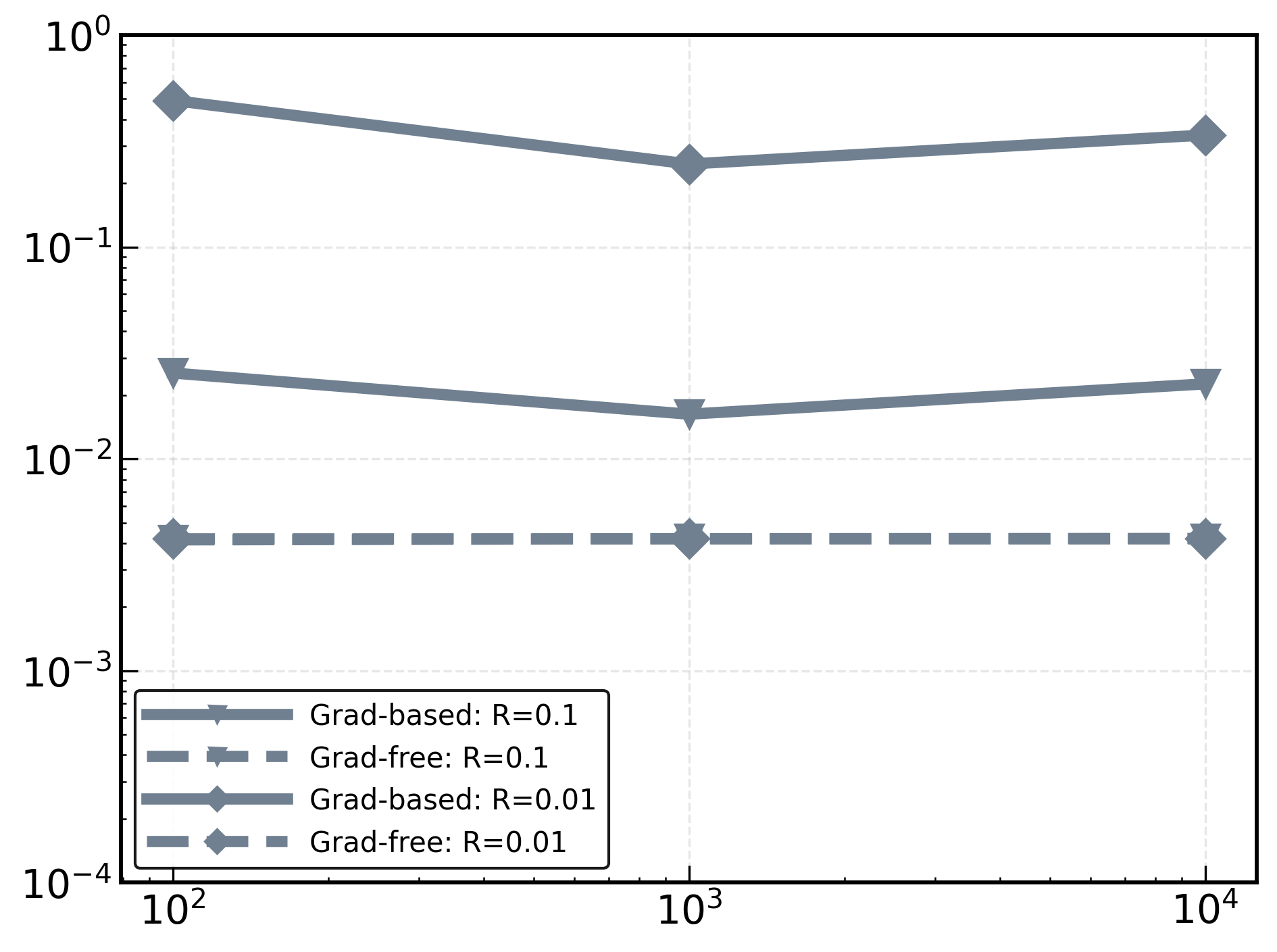}
            \put(50,-2){\footnotesize $J$}
            \put(-5,30){\footnotesize \begin{turn}{90} $\lvert \widehat P_f- P_{\mathrm{ref}}\rvert$\end{turn}}
        \end{overpic}
        \caption{}
    \end{subfigure}
    \hfill
    \begin{subfigure}[t]{0.48\textwidth}
        \centering
        \begin{overpic}[width=\textwidth]{./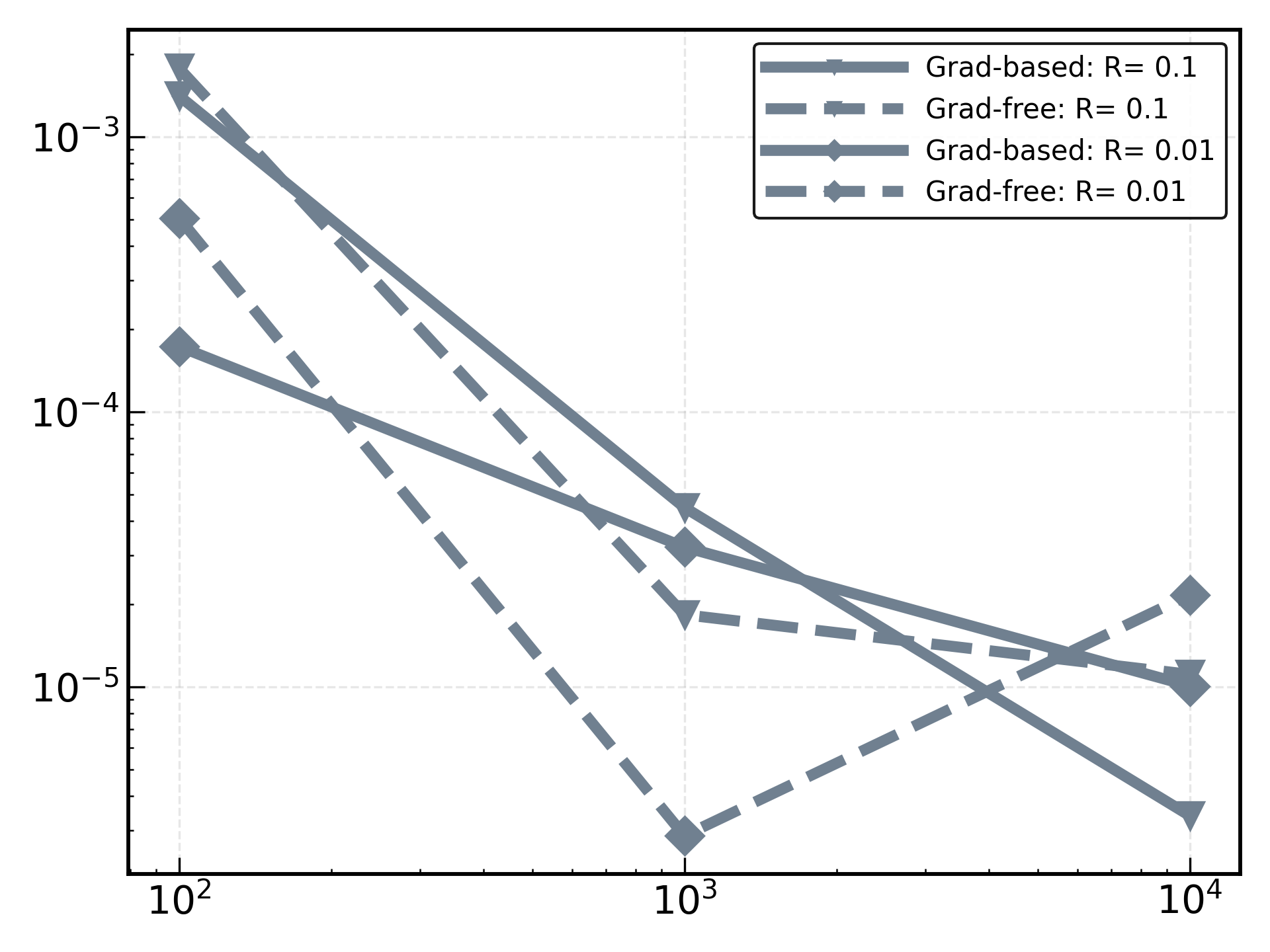}
         \put(50,-2){\footnotesize $J$}
            \put(-5,30){\footnotesize \begin{turn}{90} $\lvert \widehat P_f- P_{\mathrm{ref}}\rvert$\end{turn}}
        \end{overpic}
        \caption{}
    \end{subfigure}
    \caption{Dependence of the ALDI estimator on the ensemble size $J$. In (a), the results of Alg.~\ref{alg:aldi_product} are shown, in (b), the results of Alg.~\ref{alg:aldi_mixture_is} are shown for the corresponding number of ALDI samples $J$ in the IS estimator.}
    \label{fig:size_and_failed}
\end{figure}

Next, we investigate the influence of the smoothing parameter $\delta$.
For some values $\delta\in[10^{-4},10^{-1}]$, we compute the estimator $\widehat P_f$
using ALDI ensembles of size $J=100$ and $J=1000$. 
Figure~\ref{fig:delta_and_obs} (a) shows that the estimation error is essentially
flat for sufficiently small $\delta$, confirming the theoretical independence
of the limiting measure with respect to $\delta$.
For larger values of $\delta$, a mild deterioration in accuracy can be observed,
especially for smaller ensemble sizes.

Figure~\ref{fig:delta_and_obs} (b) illustrates the dependence on the observational
noise $R$.
Larger values of $R$ reduce the selectivity of the likelihood term
$\exp(-\tilde G^2/(2R))$, leading to moderately increased errors.
This effect is more pronounced for small ensembles and is mitigated as $J$
increases.

\begin{figure}[h!]
    \centering
    \begin{subfigure}[t]{0.48\textwidth}
        \centering
        \begin{overpic}[width=\textwidth]{./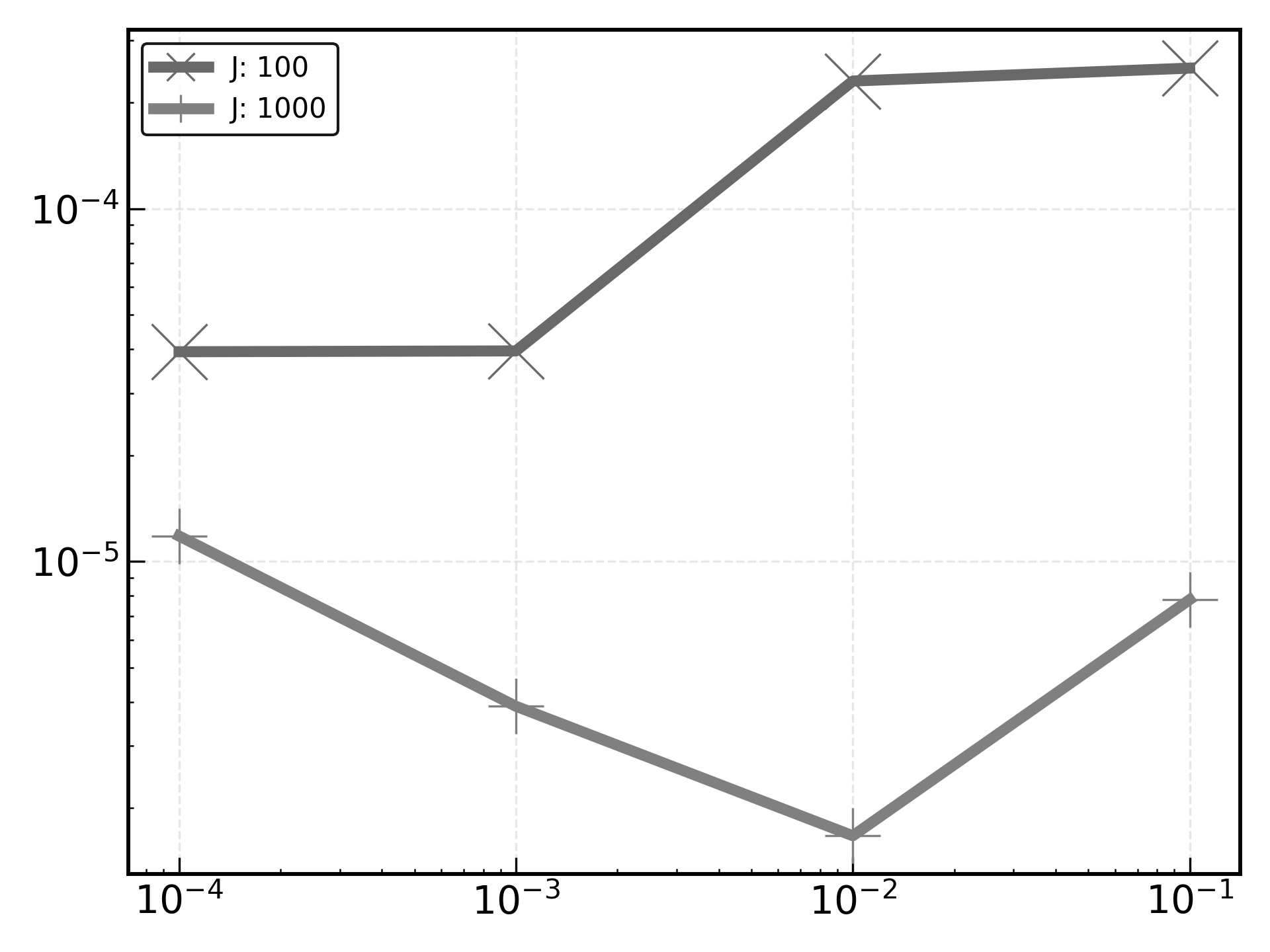}
            \put(55,0){\footnotesize$\delta$}
            \put(-5,30){\footnotesize \begin{turn}{90} $\lvert \widehat P_f- P_{\mathrm{ref}}\rvert$\end{turn}}
        \end{overpic}
        \caption{}
    \end{subfigure}
    \hfill
    \begin{subfigure}[t]{0.48\textwidth}
        \centering
        \begin{overpic}[width=\textwidth]{./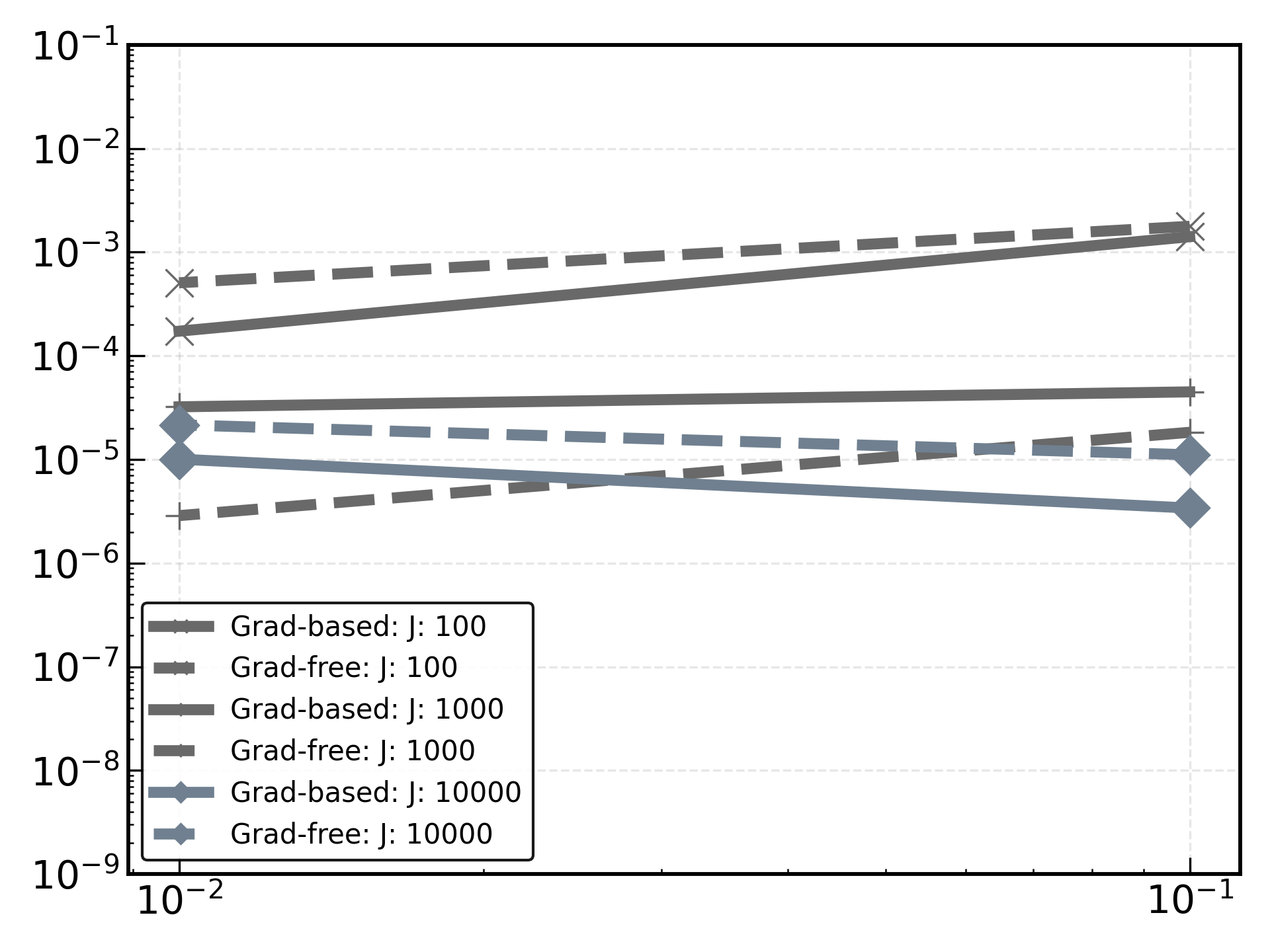}
            \put(55,0){\footnotesize$R$}
            \put(-5,30){\footnotesize \begin{turn}{90} $\lvert \widehat P_f- P_{\mathrm{ref}}\rvert$\end{turn}}
        \end{overpic}
        \caption{}
    \end{subfigure}
    \caption{Influence of the smoothing parameter $\delta$ (a) and the observational
    noise level $R$ (b) on the ALDI estimator for the convex example.
    The estimate is computed based on \ref{alg:aldi_mixture_is} with $10^3$ samples and $8$ mixture components fitted to each ALDI output.}
    \label{fig:delta_and_obs}
\end{figure}

The number of particles located inside the failure domain increases
with $J$, indicating improved exploration of the region of the rare event and reduced
weight degeneracy in the subsequent importance-sampling step, cp. Figure~\ref{fig:gm_components}.
We further examine the influence of the number of Gaussian mixture components
used in the importance-sampling proposal.
Figure~\ref{fig:gm_components} shows that increasing the number of components
generally improves accuracy for sufficiently large ensembles.
For very small ensembles, however, the estimator becomes more variable,
reflecting the increased difficulty of fitting expressive mixture models from
limited data.
\begin{figure}[h!]
    \centering
    \begin{subfigure}[t]{0.395\textwidth}
        \centering
         \centering
        \begin{overpic}[width=\textwidth]{./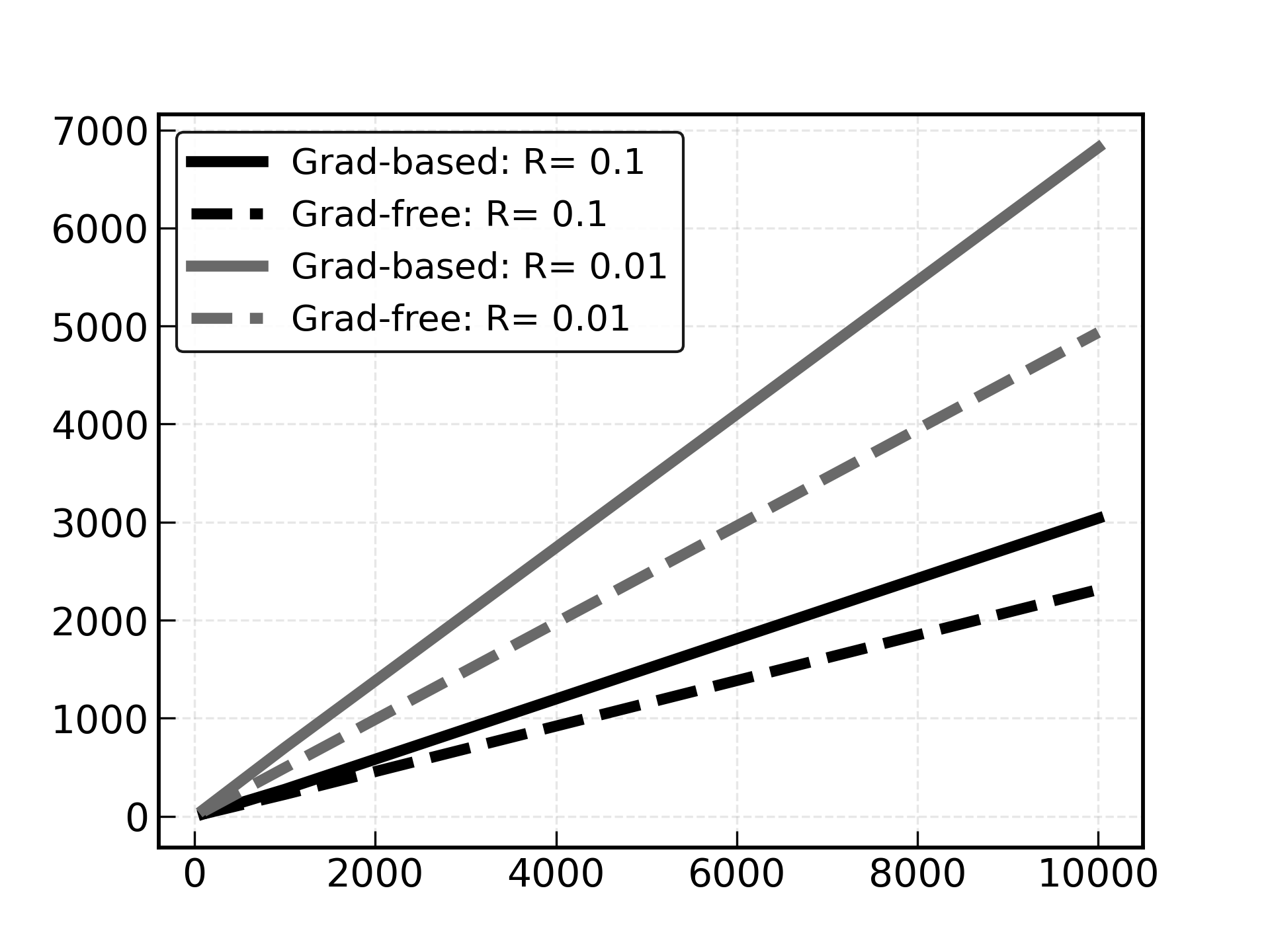}
        \put(50,-2){\footnotesize$J$}
        \put(-2,25){\footnotesize \begin{turn}{90} $\#$ particles in $F$\end{turn}}
        \end{overpic}
        \caption{}
    \end{subfigure}
    \hfill
    \begin{subfigure}[t]{0.59\textwidth}
        \centering
        \begin{overpic}[width=\textwidth]{./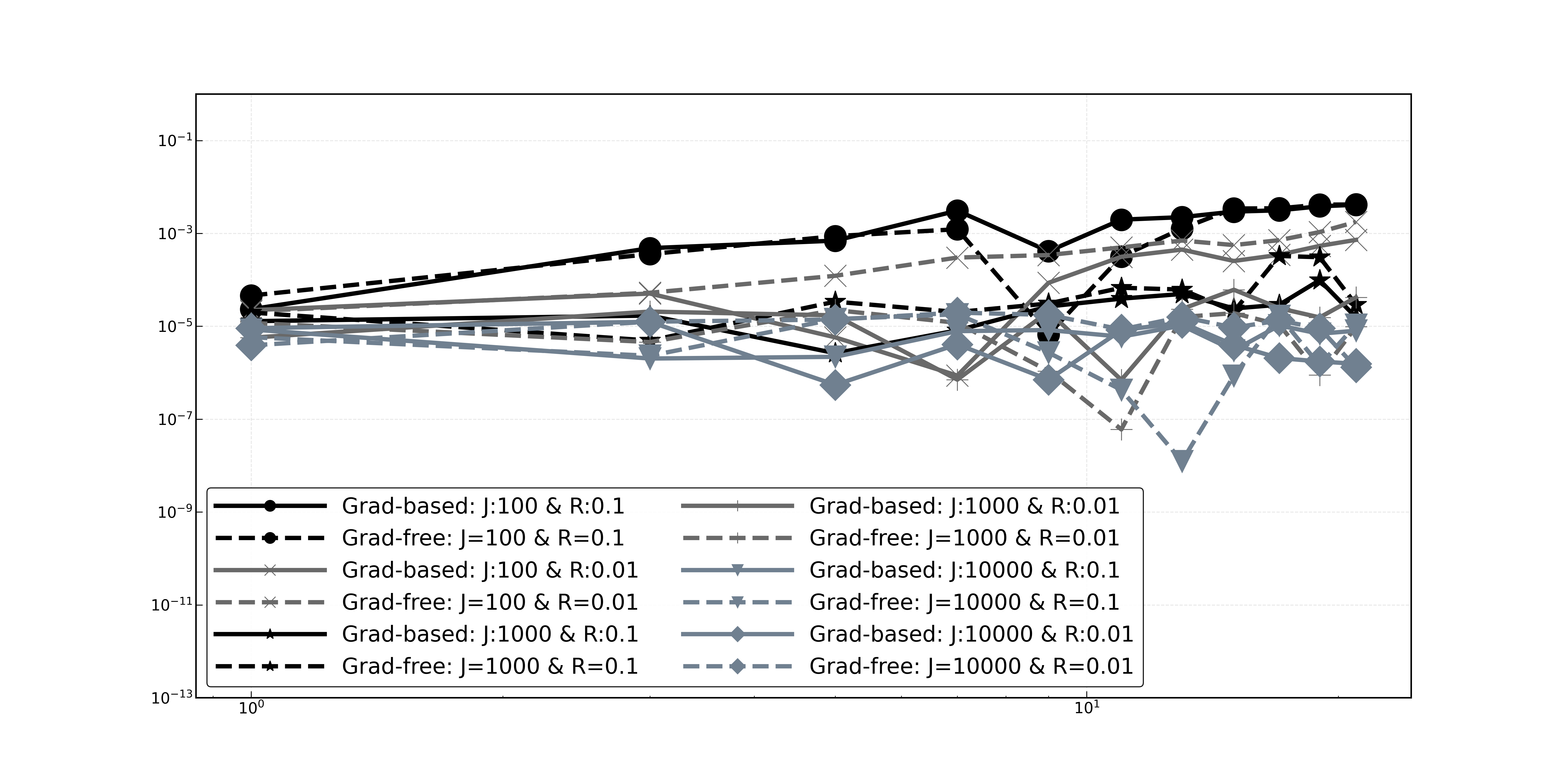}
        \put(20,0){$\#$ Gaussian mixture components}
        \put(2,18){\small \begin{turn}{90} $\lvert \widehat P_f- P_{\mathrm{ref}}\rvert$\end{turn}}
    \end{overpic}
        \caption{}
    \end{subfigure}
    \caption{In (a), the number of particles lying inside of the failure domain is shown. In (b), the absolute error {of the estimator $\hat{P}_f$} as a function of the number of Gaussian mixture
    components used in the importance-sampling proposal Alg.~\ref{alg:aldi_mixture_is} with $10^3$ samples.}
    \label{fig:gm_components}
\end{figure}

We finally assess the accuracy of the subsequent importance sampling step based
on the ALDI-generated proposal.
Figure~\ref{fig:is_error} reports the absolute error of the IS estimator as a
function of the total number of importance samples.
For all ensemble sizes, the IS error decays at the expected Monte--Carlo rate,
indicating that the fitted Gaussian mixture provides a sufficiently accurate
proposal distribution.
Larger ALDI ensembles lead to uniformly smaller IS errors, confirming that a
better approximation of the rare-event distribution yields more balanced
importance weights and improved sampling efficiency.
\begin{figure}[h!]
    \centering
    \begin{overpic}[width=0.7\textwidth]{./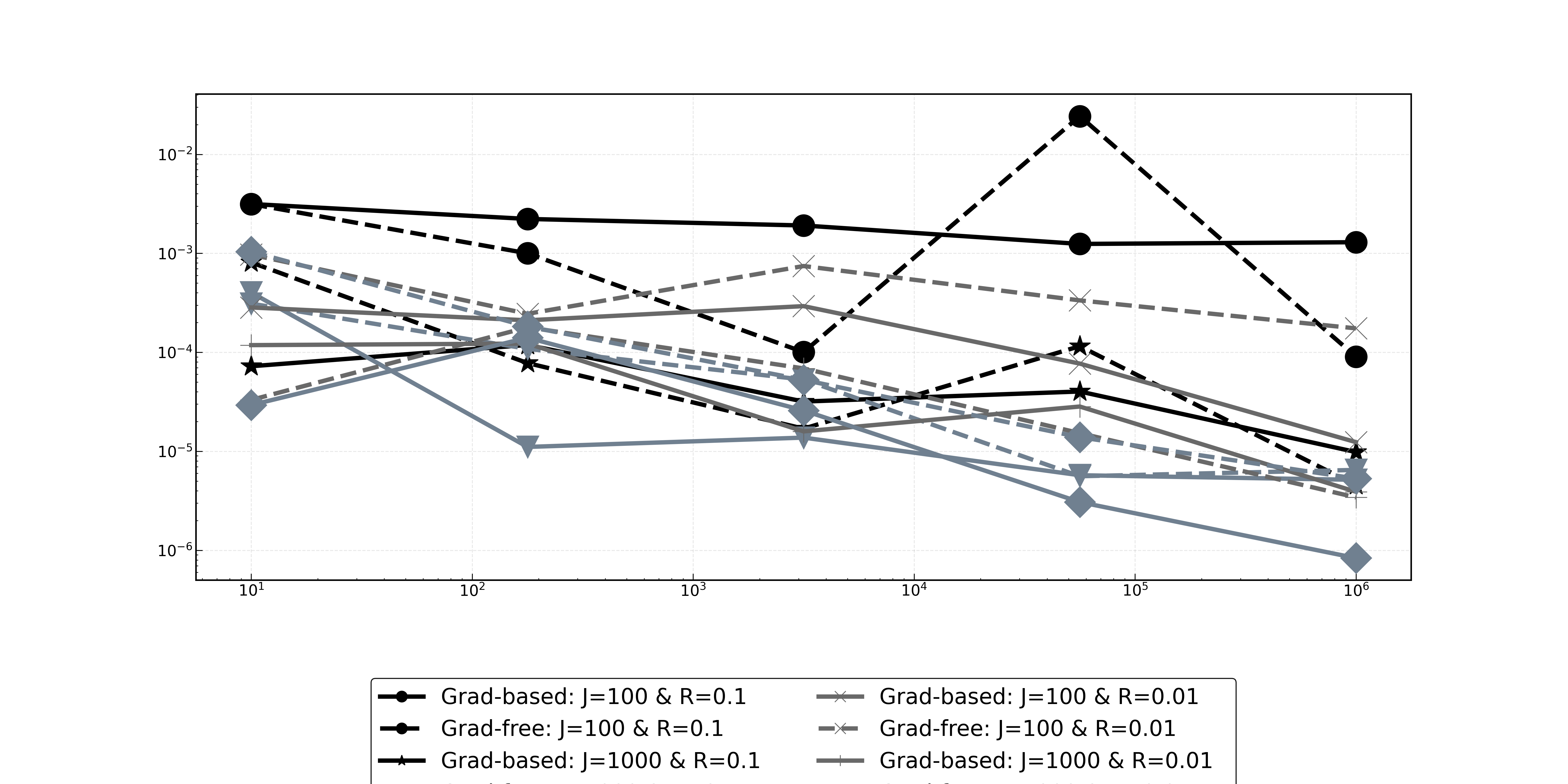}
        \put(5,20){\footnotesize \begin{turn}{90}
        $\lvert \widehat P_f^{\mathrm{IS}}- P_{\mathrm{ref}}\rvert$
        \end{turn}}
        \put(40,8){\footnotesize $\#$ IS samples}
    \end{overpic}
    \caption{Absolute error of the IS estimator in Alg.~\ref{alg:aldi_mixture_is} as a function of
    the number of IS samples for different ALDI ensemble sizes.
    Larger ensembles yield more accurate proposals and faster error decay.}
    \label{fig:is_error}
\end{figure}

\paragraph{A rarer-event regime beyond Monte Carlo.}
We conclude with a more challenging variant of the convex example in which the
failure event is significantly rarer.
In this regime, crude Monte Carlo sampling becomes ineffective: within feasible
computational budgets, either no failure samples are observed or the resulting
estimators exhibit prohibitively large variance.
For this reason, Monte Carlo reference values are not reported in what follows. In the following, we fixed the prior distribution as Gaussian with mean $(-2,-2)$ and covariance matrix $0.8 Id$, and solved ALDI using the Euler-Maruyama method with time step $\Delta\tau=0.0005$ up to time $\tau=20$. The smoothing parameter was taken $\delta=0.001$ and $2$ Gaussian components in the importance sampling step.

Figure~\ref{fig:rare_scatter} shows the final ALDI ensembles for three ensemble
sizes and two noise levels.
Despite the increased rarity of the event, ALDI continues to concentrate
particles close to the failure boundary, thereby producing informative proposal
distributions for importance sampling.
Both gradient-based and gradient-free variants exhibit stable behaviour, with
larger ensembles leading to sharper resolution of the rare-event region.

\begin{figure}[h!]
    \centering
    \begin{overpic}[width=\linewidth]{./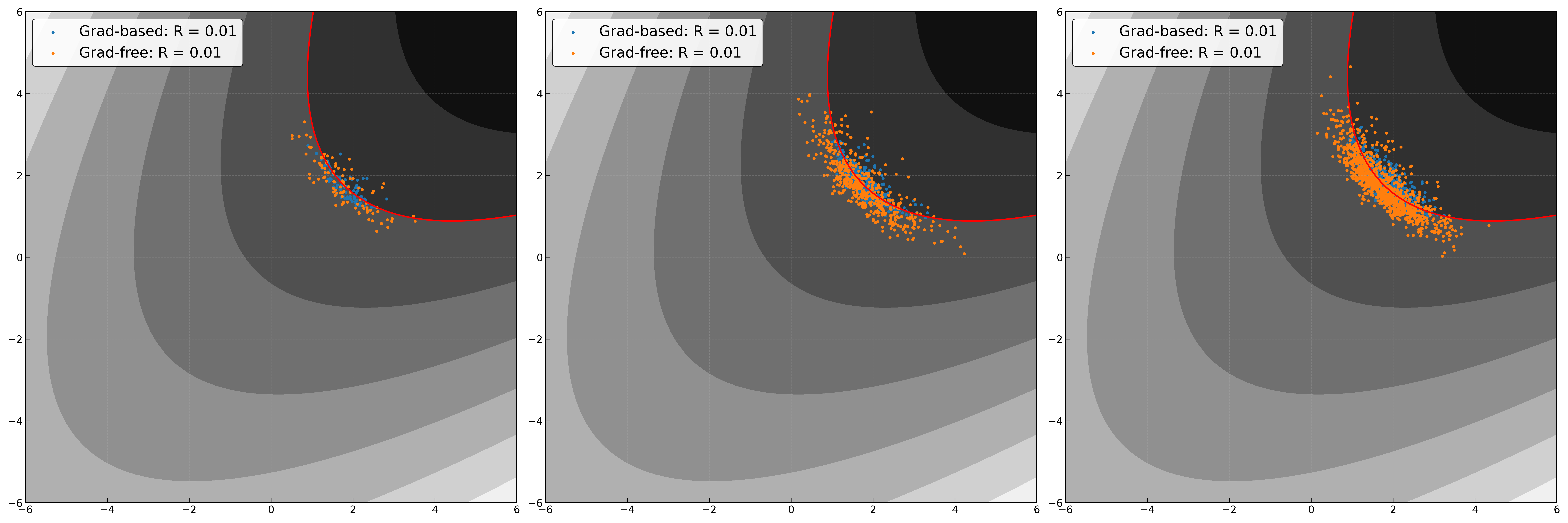}
           \put(15,-2){\footnotesize(a)}
           \put(48,-2){\footnotesize(b)}
           \put(81,-2){\footnotesize(c)}

           \put(15,33){\footnotesize$J=100$}
           \put(48,33){\footnotesize$J=500$}
           \put(81,33){\footnotesize$J=1000$}
    \end{overpic}
    \caption{Final ALDI ensembles in a rarer-event regime.
    Columns correspond to ensemble sizes $J=100,500,1000$.
    Colors indicate gradient-based and gradient-free ALDI for
    $R=0.01$.
    The red curve denotes the failure boundary $G=0$.}
    \label{fig:rare_scatter}
\end{figure}

We next assess the performance of the subsequent importance sampling step.
Figure~\ref{fig:rare_is} reports the estimated failure probability and the
corresponding variance as functions of the number of importance samples.
In contrast to standard Monte Carlo sampling, the ALDI-based importance sampling estimator remains stable in the rare-event regime. Increasing the ALDI ensemble size leads to a systematic reduction in the estimated variance, indicating that improved proposal quality directly translates into more efficient importance sampling. In contrast, sampling from the prior fails to capture the rare-event region altogether and therefore does not provide a meaningful estimator.

\begin{figure}[h!]
    \centering
    \begin{subfigure}[t]{0.48\textwidth}
        \centering
        \begin{overpic}[width=\textwidth]{./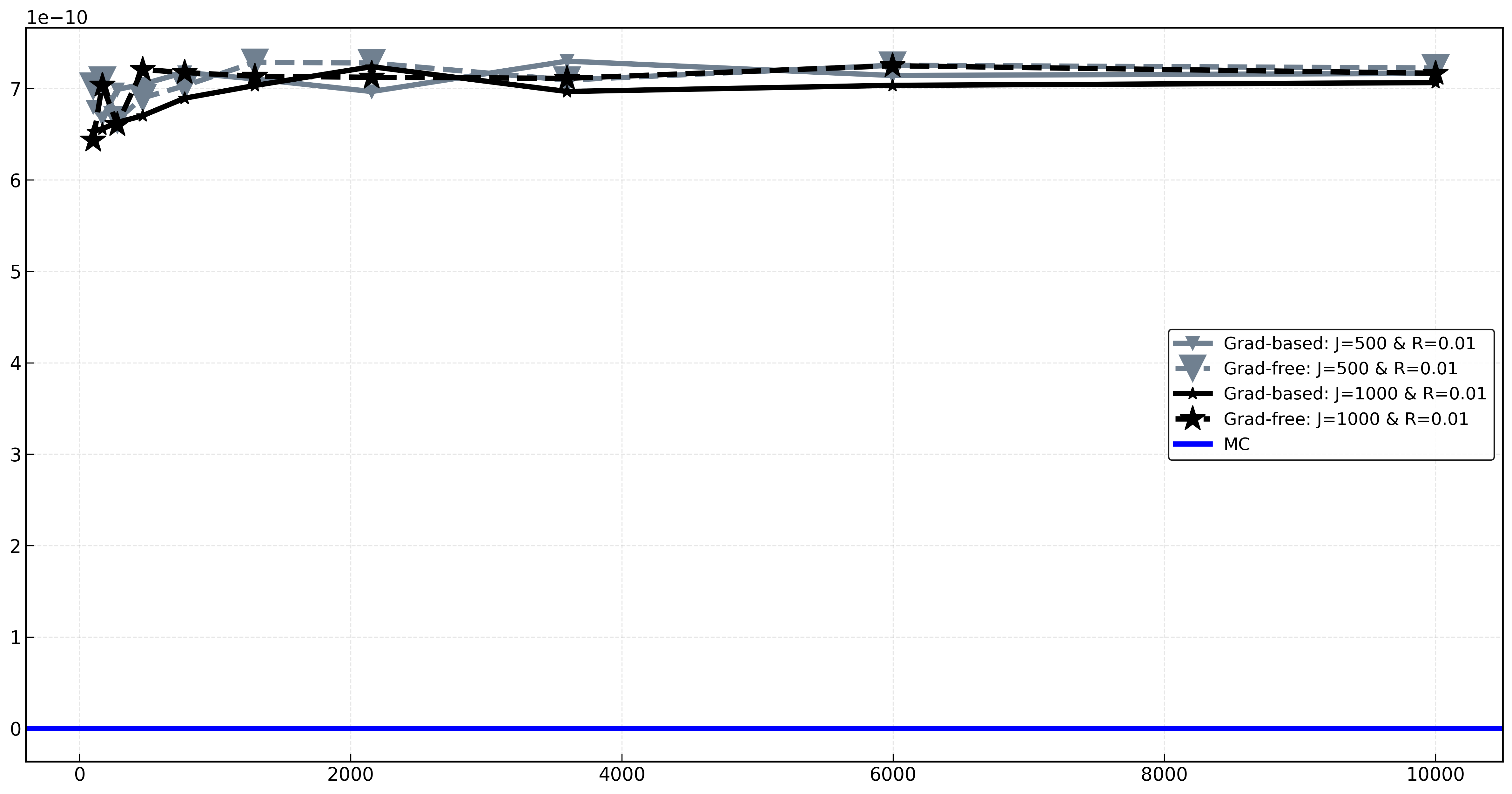}
        \put(37,-3){\footnotesize $\#$ IS samples}
        \put(-7,20){\begin{turn}{90} \footnotesize$\hat{P}^{IS}_f$\end{turn}}
        \end{overpic}
        \caption{}
    \end{subfigure}
    \hfill
    \begin{subfigure}[t]{0.48\textwidth}
        \centering
        \begin{overpic}[width=\textwidth]{./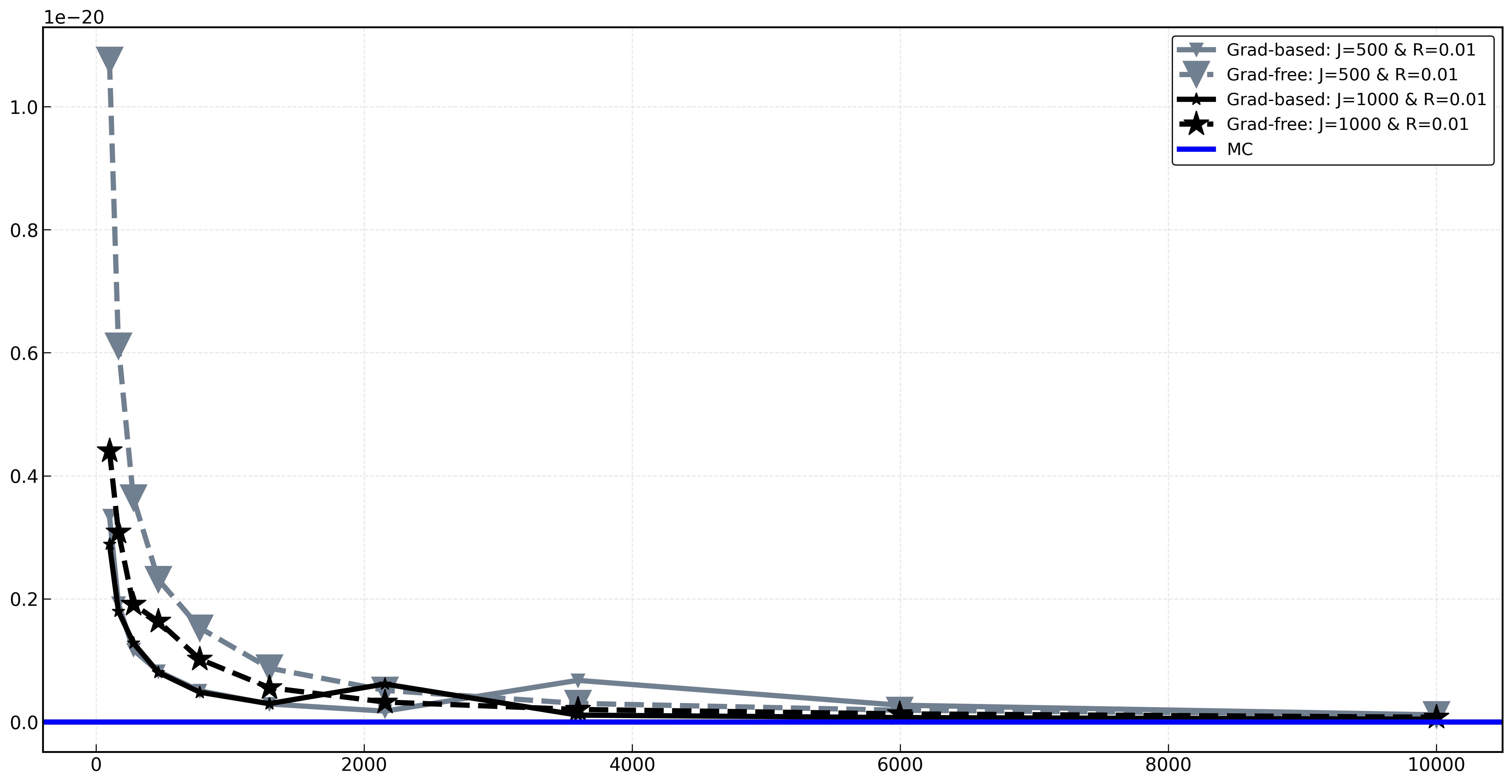}
         \put(37,-3){\footnotesize $\#$ IS samples}
        \put(-5,20){\begin{turn}{90} \footnotesize$\mathrm{Var}(\hat{P}^{IS}_f)$\end{turn}}
        \end{overpic}
        \caption{}
    \end{subfigure}
    \caption{Importance sampling diagnostics in the rarer-event regime. The ALDI-generated proposal enables stable probability estimation (see (a)) and
    controlled variance decay (see (b)), even when crude Monte Carlo fails.}
    \label{fig:rare_is}
\end{figure}
\subsection{A hyperbolic saddle}
\label{SUBSEC:saddle}
We next consider a simple linear example that captures some of the dynamical
features underlying the point-vortex model discussed later in
Section~\ref{SUBSEC:vortex}.  Let $(x(t),y(t))$ solve
\begin{equation}
\label{eq:saddle_eq}
\begin{aligned}
    \dot{x} &= -\lambda x,\\
    \dot{y} &= \mu y,
\end{aligned}
\end{equation}
with $\lambda,\mu>0$.  The origin is a saddle equilibrium with one-dimensional
stable and unstable manifolds given by the horizontal and vertical axes,
respectively.  The explicit solution for initial data $(x_0,y_0)$ is
\begin{equation}
\label{eq:saddle_sols}
    x(t)=x_0 e^{-\lambda t}, 
    \qquad 
    y(t)=y_0 e^{\mu t}.
\end{equation}

Unless $y_0=0$, the trajectory escapes exponentially fast along the unstable
direction, given by the vertical axis.  We consider the rare event that a trajectory remains close to the
origin over a finite time window $[0,T]$.  This is quantified by the observable
\begin{equation}
    \label{eq:obs1_saddle}
    A(x_0,y_0)
    := \frac{1}{T} \int_0^T \bigl(x(s)^2 + y(s)^2 \bigr)\, ds.
\end{equation}
A short computation gives the closed-form expression
\[
A(x_0,y_0)
= \frac{1}{T}
\left[
  \frac{1-e^{-2\lambda T}}{2\lambda}\, x_0^2
  +
  \frac{e^{2\mu T}-1}{2\mu}\, y_0^2
\right],
\]
so that the rare-event set $\{A<r\}$, for $r\ll 1$, is the solid ellipse
\begin{equation}
\label{eq:failure_saddle_ellipse}
\mathcal{E}
=
\left\{
(x_0,y_0)\in\mathbb{R}^2:\ 
\left(\frac{1-e^{-2\lambda T}}{2\lambda r T}\right) x_0^2
+
\left(\frac{e^{2\mu T}-1}{2\mu r T}\right) y_0^2 
< 1
\right\}.
\end{equation}

We define the limit-state function
\begin{equation}
    \label{eq:LSF_saddle}
    G(x_0,y_0) = A(x_0,y_0) - r.
\end{equation}
For Gaussian prior samples $(x_0,y_0)\sim \mathcal{N}(m,\sigma^2 I)$, the
probability of failure is
\[
P_f
=
\iint_{\mathcal{E}} 
\rho_0(x,y)\,dx\,dy,
\qquad
\rho_0(x,y)
=
\frac{1}{2\pi\sigma^2}
\exp\!\left(
-\frac{(x-m_x)^2 + (y-m_y)^2}{2\sigma^2}
\right).
\]

This example is particularly convenient because the failure set is an ellipse
and $P_f$ can again be computed numerically to high accuracy.  This allows us to assess the performance of ALDI in a dynamical setting featuring unstable behaviour, where the rare event corresponds to trajectories remaining close to the saddle point.

The corresponding potential is
\begin{equation}
    \label{eq:potential_saddle}
    \Phi(x_0,y_0)
    =
    -\log \rho_0(x_0,y_0)
    +
    \frac{1}{2R}\,\tilde{G}(x_0,y_0)^2,
\end{equation}
where $\tilde G=\max\{0,G\}$.  
The potential reads
\begin{equation}\label{eq:Phi_saddle}
  \Phi(x_0,y_0)
  = -\log \rho_0(x_0,y_0)
    + \frac{1}{2R}\,\tilde G(x_0,y_0)^2,
\end{equation}
where $\tilde G=\max\{0,G\}$ and $G(x_0,y_0)=A(x_0,y_0)-r$ with
\begin{equation}\label{eq:A_saddle}
A(x_0,y_0)
= \frac{1}{T}\Bigg(
   \frac{1-e^{-2\lambda T}}{2\lambda}x_0^2
 + \frac{e^{2\mu T}-1}{2\mu}y_0^2
  \Bigg).
\end{equation}
The gradient of $G$ is computed explicitly as
\begin{equation}\label{eq:gradG_saddle}
\nabla G(x_0,y_0)
 = \frac{1-e^{-2\lambda T}}{\lambda T}\,x_0\,\mathbf e_x
 + \frac{e^{2\mu T}-1}{\mu T}\,y_0\,\mathbf e_y.
\end{equation}
Hence,
\begin{equation}\label{eq:gradPhi_saddle}
\nabla\Phi(x_0,y_0)
 = \frac{1}{\sigma^2}
   \begin{pmatrix}x_0-m_x\\ y_0-m_y\end{pmatrix}
 + \frac{\tilde G(x_0,y_0)}{R}\,
   \nabla G(x_0,y_0).
\end{equation}

\paragraph{Numerical results.}
We now illustrate the behaviour of ALDI for the saddle-type example introduced
above. In the following experiments, we fix $\lambda=\mu=1$, the time horizon $T=$ 1, the tolerance $r=0.5$, and the prior distribution to be normally distributed with mean $[-2,-2]$ and covariance matrix $0.5 Id$. Additionally, we solve ALDI numerically using the Euler-Maruyama method, with time step $\Delta \tau=\frac{0.001}{4}$ up to time $\tau=5$. In the importance sampling step, the number of Gaussian components used was 1.

Since the failure set $\{A<r\}$ is an ellipse and its probability under the
Gaussian prior can be computed accurately, this setting provides a controlled
benchmark for assessing the estimator $\widehat P_f$ in a dynamical situation
characterized by instability along the unstable manifold. All failure probability estimates in this section are computed using the mixture-based importance sampling estimator (Alg. \ref{alg:aldi_mixture_is}).

Figure~\ref{fig:saddle_scatter} shows representative ALDI ensembles for different ensemble
sizes $J\in\{100,500,1000\}$ and noise levels $R\in\{1,0.1,0.01\}$. As $R$ decreases,
the invariant measure concentrates increasingly near the failure ellipse, reflecting
the stronger selectivity of the likelihood. Increasing $J$ reduces sampling noise and
leads to a visibly higher density of particles inside the failure domain. While both
gradient-based and gradient-free ALDI capture the geometry of the rare-event set,
the gradient-based variant produces a sharper concentration near the boundary
$\{G=0\}$, particularly for smaller ensemble sizes.

\begin{figure}[h!]
    \centering
    \begin{overpic}[width=\textwidth]{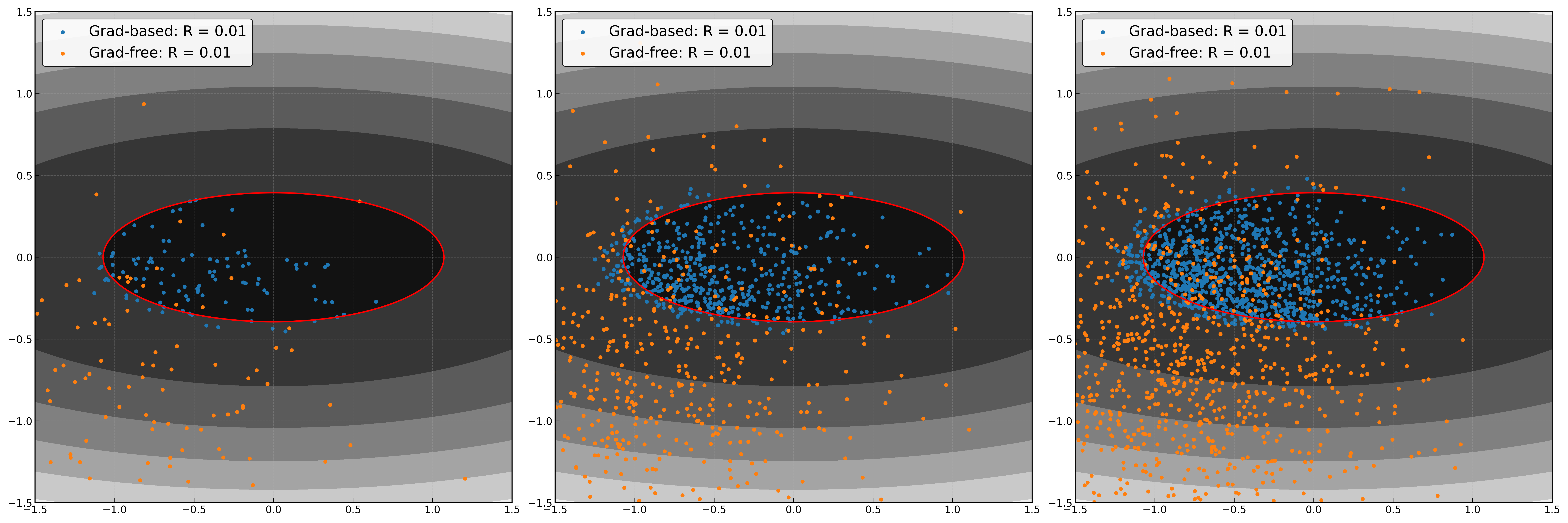}
    \put(13,33){\small $J=100$}
    \put(45,33){\small$J=500$}
    \put(77,33){\small $J=1000$}
    \end{overpic}
    \caption{ALDI ensembles for the saddle example for different ensemble sizes
    $J$ and noise levels $R$. The red ellipse indicates the failure set
    $\{A<r\}$. Again, the red ellipse corresponds to the set $\{G=0\}$}
    \label{fig:saddle_scatter}
\end{figure}

The number of particles located inside the failure domain is shown in
Figure~\ref{fig:saddle_mix}(a). As expected, this number increases with $J$ and is larger for smaller values of $R$, reflecting the stronger concentration of the ALDI
invariant measure near the failure ellipse. The effect is particularly pronounced for
the gradient-based dynamics, which generate more informative proposals for the
subsequent IS step.

Finally, Figure~\ref{fig:saddle_mix}(b) illustrates the dependence of the estimator
accuracy on the number of Gaussian mixture components used in the proposal.
For sufficiently large ensembles, increasing the number of components improves the
approximation of the ALDI cloud and reduces the estimation error. For smaller
ensembles, however, overly expressive mixtures lead to increased variability,
reflecting the difficulty of fitting complex models from limited data.

\begin{figure}[h!]
    \centering
    \begin{subfigure}[t]{0.395\textwidth}
        \centering
         \centering
        \begin{overpic}[width=\textwidth]{./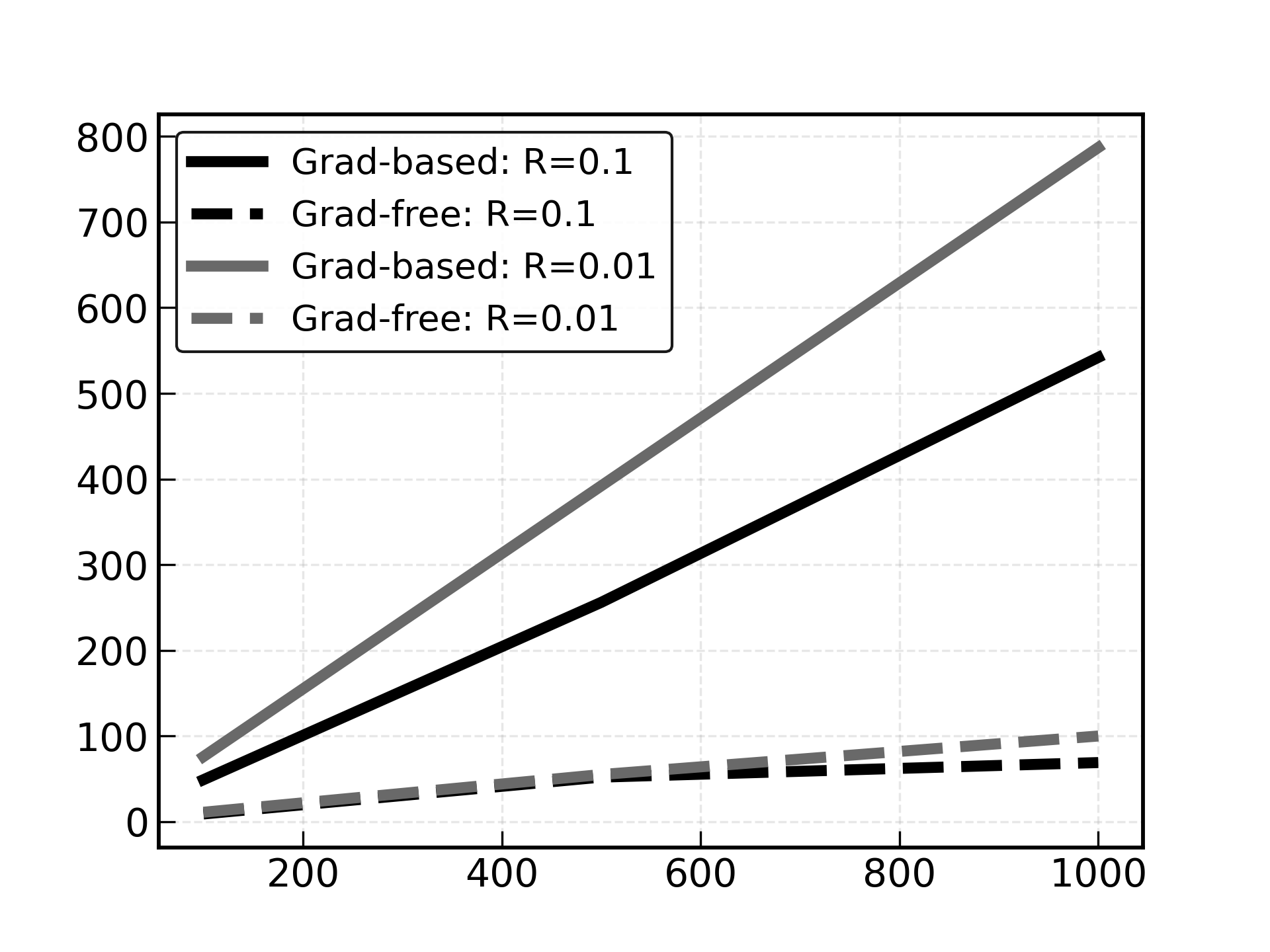}
        \put(50,-2){\footnotesize$J$}
        \put(-2,20){\footnotesize \begin{turn}{90} $\#$ particles in $F$\end{turn}}
        \end{overpic}
        \caption{}
    \end{subfigure}
    \hfill
    \begin{subfigure}[t]{0.59\textwidth}
        \centering
        \begin{overpic}[width=\textwidth]{./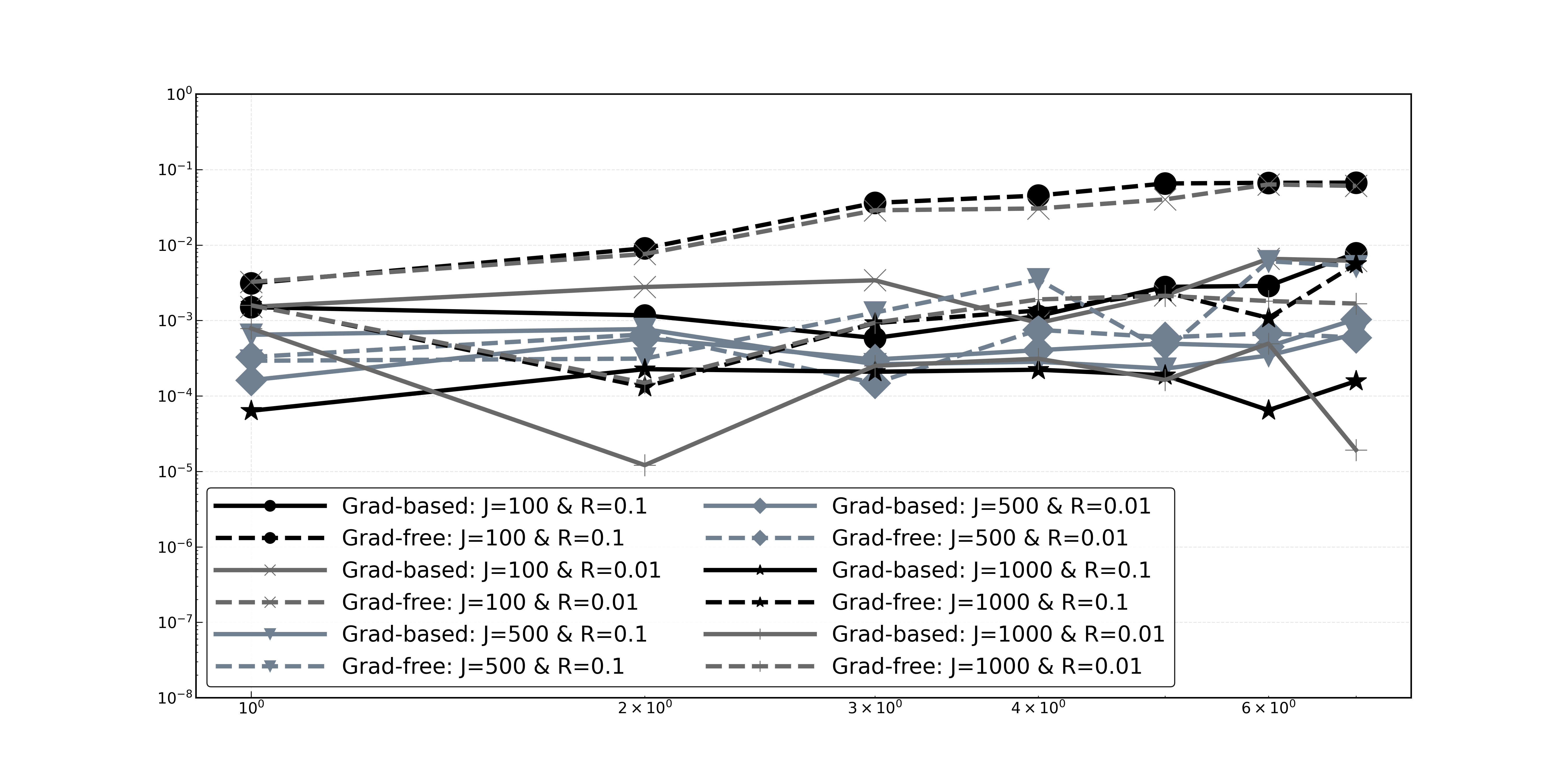}
        \put(20,0){\footnotesize$\#$ Gaussian mixture components}
        \put(3,18){\footnotesize \begin{turn}{90} $\lvert \widehat P_f- P_{\mathrm{ref}}\rvert$\end{turn}}
    \end{overpic}
        \caption{}
    \end{subfigure}
    \caption{In (a), the number of particles lying inside of the failure domain is shown. In (b), the absolute error {of the estimator $\hat{P}_f$} as a function of the number of Gaussian mixture
    components used in the importance-sampling proposal Alg.~\ref{alg:aldi_mixture_is} with $10^3$ samples.}
    \label{fig:saddle_mix}
\end{figure}

We next examine the accuracy of the importance-sampling estimator obtained from
the ALDI-generated proposals. Figure~\ref{fig:saddle_is_error} reports the absolute
error of $\widehat P_f$ as a function of the number of IS samples. For all ensemble
sizes, the error decays at the expected Monte--Carlo rate, confirming that the fitted
Gaussian mixtures provide effective proposal distributions. 

\begin{figure}[h!]
    \centering
    \begin{overpic}[width=0.85\textwidth]{./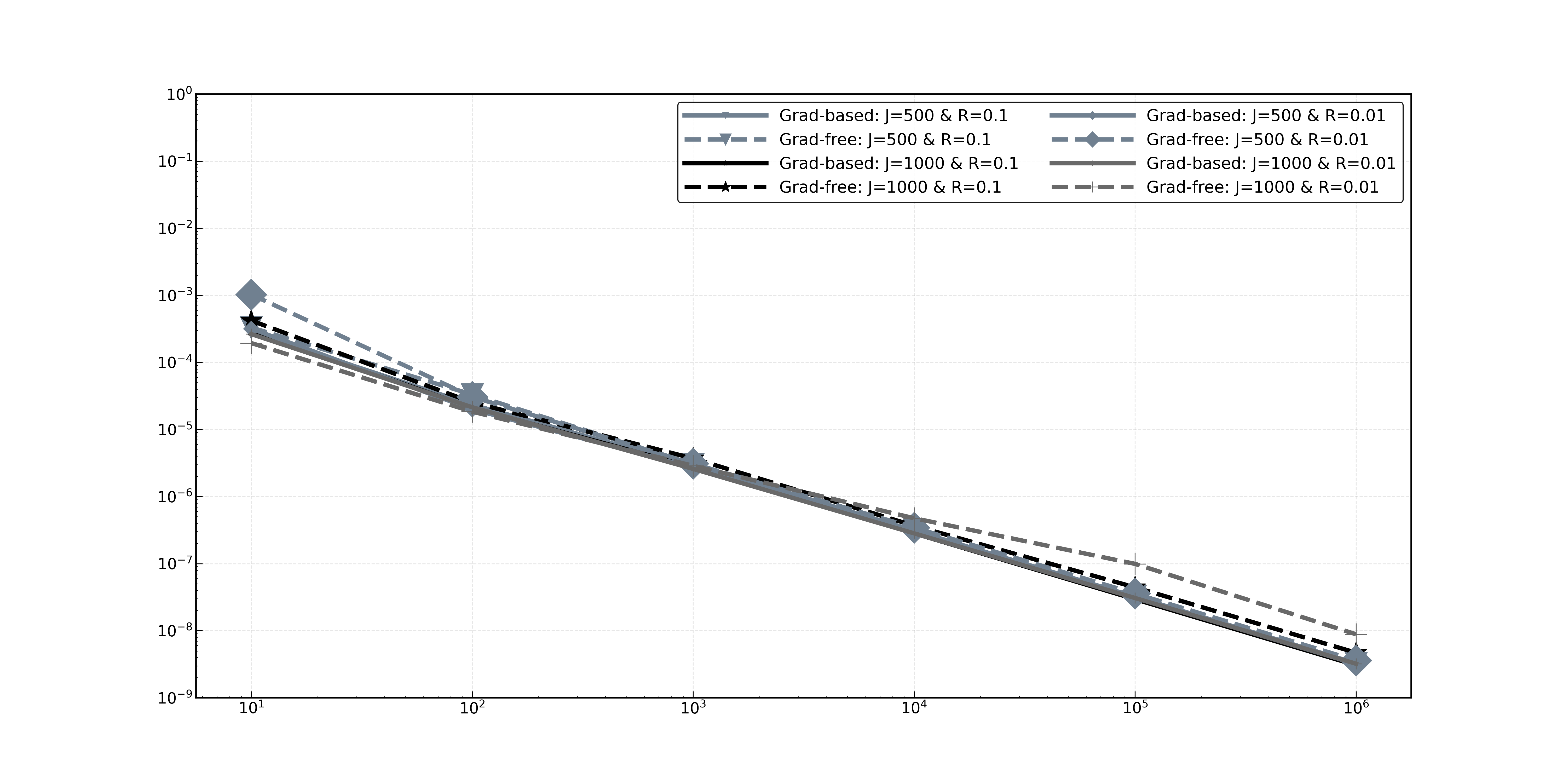}
        \put(5,15){\small \begin{turn}{90}
        $\lvert \widehat P_f^{\mathrm{IS}}- P_{\mathrm{ref}}\rvert$
        \end{turn}}
        \put(40,00){\small $\#$ IS samples}
    \end{overpic}
    \caption{Absolute error of the IS estimator in Alg.~\ref{alg:aldi_mixture_is} as a function of
    the number of IS samples for different ALDI ensemble sizes.}
    \label{fig:saddle_is_error}
\end{figure}

\paragraph{Summary.}
Across both test cases, the numerical results are in good agreement with the
theoretical predictions. ALDI yields accurate estimates of the failure
probability, with errors decreasing as the ensemble size $J$ increases. Once
the smoothing parameter $\delta$ is sufficiently small, the estimator is largely
insensitive to its precise value, whereas smaller values of $R$ sharpen the
auxiliary distribution around the rare-event region and improve localization of
the failure set. Consistently across both examples, the number of particles
lying inside the failure domain increases with $J$ and is higher for smaller
$R$, indicating a better-adapted proposal for subsequent importance sampling.

Regarding the two estimation strategies, Alg.~\ref{alg:aldi_product} provides a useful
conceptual link between the biased ALDI measure and the original failure
probability through a product representation. However, the numerical
experiments confirm that directly estimating the associated normalisation
constant is unstable in practice and sensitive to incomplete stationarity of
the ALDI dynamics. In contrast, Alg.~\ref{alg:aldi_mixture_is} avoids this difficulty by fitting
an explicit proposal distribution to the ALDI ensemble and applying standard
importance sampling. This approach yields robust and systematically improving
estimates as $J$ increases, and its performance is directly correlated with the
quality of the ALDI-generated samples.

Overall, the two examples demonstrate that ALDI provides a reliable mechanism
for constructing effective proposals for rare-event estimation across problems
of different geometric and dynamical structure, while Alg.~\ref{alg:aldi_mixture_is} offers a
practically stable and accurate estimator in regimes where direct normalisation
is infeasible.

\subsection{Atmospheric blockings and rare events}
\label{SUBSEC:vortex}

An \textit{atmospheric blocking} is a quasistationary structure in the atmospheric flow over a large region. Their timescale ranges from several days to weeks, and can have a drastic impact in the local weather \cite{Kuhlbrodt00}. It becomes then of crucial relevance to estimate the probability of duration of an atmospheric blocking to remain over a long time window. Events of this nature are often captured by means of a real-valued function $G$  \cite{Ragone18,Ragone20}.

\subsubsection{Point-vortex dynamics}
A simple description of atmospheric flows at synoptic and planetary scales is given by the standard planar Euler equations, where the flow is assumed to be inviscid and incompressible \cite{Muller15}. Its expression in vorticity form is thus
\begin{equation}
\label{eq:Euler_vorticity}  
    \frac{\partial \bm{\omega}}{\partial t}+\bm{u}\cdot\nabla\bm{\omega}=\bm{\omega}\cdot \nabla \bm{u},
\end{equation}
where $\bm{u}(t,x)$ and $\bm{\omega}(t,x)$ are the velocity and vorticity fields of the flow on $x\in\mathbb{R}^2$ at time $t\geq0$, respectively.

Furthermore, assuming the vorticity is concentrated in $N$ disjoint points $X_j\in\mathbb{R}^2$, i.e.
\[
    \bm{\omega}(t,x)=\sum_{j=1}^N \Gamma_j \delta_{X_j(t)}(x),
\]
where $\delta_X$ denotes the Dirac delta distribution supported on $X$, one can derive an ODE system for the position of the point-vortices $X_j=(x_j,y_j)$, which reads \cite{Newton01}
 \begin{equation}
    \label{eq:Nvortices_plane}
        \begin{split}
            \dot{x}_j & =-\frac{1}{2\pi} \sum_{i\neq j} \frac{\Gamma_i(y_j-y_i)}{l_{ij}^2} \\
            \dot{y}_j & = \frac{1}{2\pi} \sum_{i\neq j} \frac{\Gamma_i(x_j-x_i)}{l_{ij}^2},
        \end{split}
    \end{equation}
    where $l_{ij}=\Vert X_i-X_j \Vert$ are the \textit{intervortical distances}. The system \eqref{eq:Nvortices_plane} is referred to as the \textit{$N$-vortex problem}. 
    
\begin{remark}
The derivation of the point-vortex dynamics turns out to be a good approximation of \eqref{eq:Euler_vorticity}, whenever the initial vorticity $\bm{\omega}(0,x)$ is supported on $N$ disjoint regions $\Lambda_j^{\varepsilon}$, contained in circles of radius $\varepsilon$ centred on $X_j(0)$, for $j=1,\ldots, N$. Indeed, under suitable assumptions, it has been shown that the corresponding solutions of \eqref{eq:Euler_vorticity}, converge weakly as $\varepsilon\rightarrow0$ to those given by \eqref{eq:Nvortices_plane}. We refer to \cite[Theorem 2.1]{Marchioro93} for the precise formulation of the statement. More recently, a connection of a stochastically perturbed version of \eqref{eq:Nvortices_plane} and the deterministic Navier-Stokes equation has been established, in the large ensemble limit $N\rightarrow\infty$ \cite{Flandoli24}
\end{remark}

The $N$-vortex problem has a noncanonical Hamiltonian structure. Indeed, if $\bm{X}=(X_1,\ldots,X_N)$ the system  \eqref{eq:Nvortices_plane} is written as
\[
\dot{\bm{X}} = \mathbb{J} \nabla \mathcal{H} (\bm{X}), \qquad \mathbb{J} = \left[ \begin{array}{cc}
    0 & \textup{diag}(\Gamma_1^{-1},\ldots,\Gamma_N^{-1}) \\
    -\textup{diag}(\Gamma_1^{-1},\ldots,\Gamma_N^{-1}) & 0
\end{array} \right]
\]
where the energy function $\mathcal{H}$ is given by
\begin{equation}
    \label{eq:Energy_Nvortex}
    \mathcal{H}(\bm{X}) = -\frac{1}{4\pi}\sum_{
          i\neq j} \Gamma_i\Gamma_j \ln l_{ij},
\end{equation}
and the sum is taken over all pairs $i,j\in\{1,\ldots,N\}$. For a comprehensive exposition of the dynamics of \eqref{eq:Nvortices_plane}, we refer to \cite{Newton01}.

\subsubsection{Atmospheric blockings as relative equilibria of point-vortices}

In \cite{Kuhlbrodt00}, blocking events are described by the dynamics of low-order point-vortex systems, typically when $N\leq4$. The mechanism is as follows: whenever a vortex configuration is relatively stable, any passive vortex $X_{N+1}$ (i.e. a vortex with circulation $\Gamma_{N+1}=0$) remains secluded in a large area surrounding the position of the vortices $X_1,\ldots, X_N$ for large time windows. An escape from such region can happen due to an external forcing or noisy fluctuations, see Figure~\ref{fig:passive_vortices}.  When the blocking is constituted by two (quasi)stationary vortices, we say that a \textit{high-over-low blocking} is present, see panel (a) in Figure~\ref{fig:passive_vortices}. In the case that it consists of a three point-vortex configuration, we call it an \textit{$\Omega$ blocking}, see panel (b) in Figure~\ref{fig:passive_vortices}.

\begin{figure}
     \centering
     \begin{subfigure}[b]{0.45\textwidth}
         \centering
          \begin{overpic}[width=\linewidth]{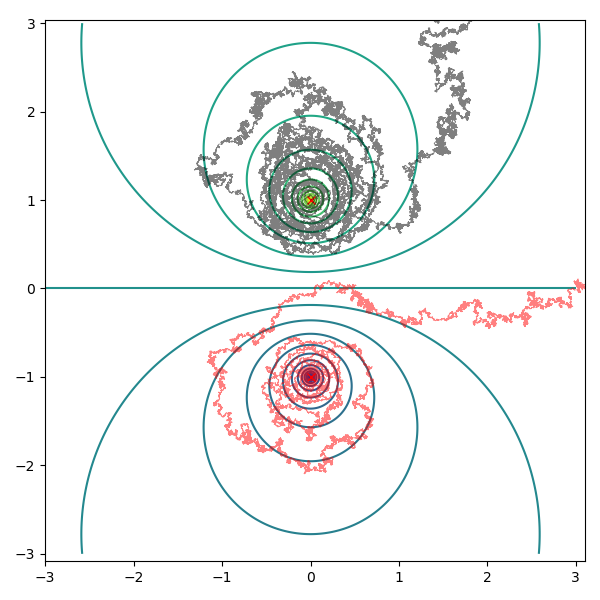}
           \put(50,-2){$x$}
      \put(3,45){$y$}
      \end{overpic}
      \caption{}
         \label{subfig:dipole_nonoise}
     \end{subfigure}
     \hfill
     \begin{subfigure}[b]{0.45\textwidth}
         \centering
          \begin{overpic}[width=\linewidth]{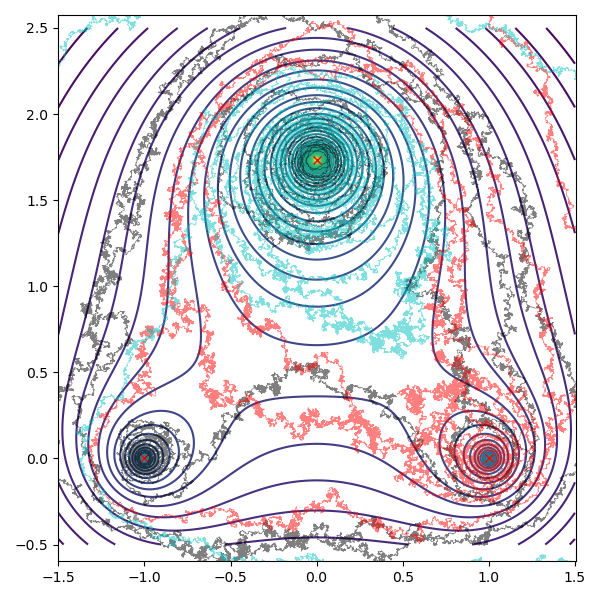}
           \put(50,-2){$x$}
      \put(3,45){$y$}
      \end{overpic}
      \caption{}
         \label{subfig:tripole_nonoise}
     \end{subfigure}
     \hfill
       \caption[Advected $N+1$-vortex problem]{Depiction of dipole and tripole configurations of point vortices in equilibrium. On panels (a) and (b), the energy landscapes for a passive vortex under the influence of two and three stationary point-vortices located at $X_j\in \mathbb R^2$ (marked as red crossed), respectively, are portrayed in solid lines. The Hamiltonian for the passive vortex $X=(x,y)$ under the influence of $N$ stationary vortices is given by $\mathcal{H}_{N+1}(x,y):= -\frac{1}{8\pi}\sum_{j=1}^N\ln\; \Vert X-X_j\Vert^2$, whose dynamics is perturbed with additive Brownian noise. Exemplary trajectories of such passive vortices, under the influence of additive Brownian noise, confirm the quasistationary region induced by the vortex configuration.}
    \label{fig:passive_vortices}
\end{figure}

In the case of the 2-vortex problem, the dynamics is well understood - in the case that the total circulation $\Gamma=\Gamma_1+\Gamma_2$ is nonzero, both vortices $X_1,X_2$ rotate around the \textit{centre of vorticity} $X_c=(\Gamma_1/\Gamma) X_1 + (\Gamma_2/\Gamma) X_2$ with constant angular velocity $\Gamma/(2\pi\Vert X_1(0)-X_2(0)\Vert^2)$, see for instance \cite[p. 39]{Newton01}. In the case $\Gamma=0$, the vortices move in a parallel fashion, with speed $\sqrt{\Gamma}/(2\sqrt{2}\pi \Vert X_1(0)-X_2(0)\Vert)$. In any case, all possible dynamics are \textit{relative equilibria}, in the sense that the intervortical distance $l_{12}$ remains constant for all $t>0$.

\textit{Fixed equilibria} in the three-vortex problem, i.e. configurations in which $\dot{X}_i=0$ for $i=1,2,3$, are characterised by the positions being collinear and that the circulations satisfy $\Gamma_1\Gamma_2 + \Gamma_2\Gamma_3+ \Gamma_3\Gamma_1=0$. A configuration is a relative equilibrium, if and only if it is an equilateral triangle \cite[Theorem 2.2.1]{Newton01}. In the case $\Gamma:=\Gamma_1+\Gamma_2+\Gamma_3\neq 0$, the configuration rotates around the centre of vorticity with angular velocity $\Gamma/2\pi l^2$, where $l=l_{ij}$. When $\Gamma=0$, the configuration translates in parallel with velocity $(\Gamma_1^2+\Gamma_2^2+\Gamma_3^2)^{1/2}/(2\sqrt{2}\pi l)$.

From the observations above, it is clear almost none of the possible configurations in low-order point vortex dynamics admits fixed equilibria, and the pictures in Figure~\ref{fig:passive_vortices} may be misleading as a representation of the real phenomenon. However, the angular velocity depends on the total circulation $\Gamma$, in such a way that it vanishes as $\Gamma\rightarrow0$ or as the intervortical distances $l\rightarrow\infty$. Therefore, a quasi-stationary regime is expected at large scales.

\subsubsection{Stability of relative equilibria and rare events}
By definition, the space of relative equilibria $\mathcal{R}$ is an invariant manifold for the dynamics induced by \eqref{eq:Nvortices_plane}. In mathematical terms, the manifold $\mathcal{R}$ reads
\begin{equation}
    \label{eq:relative_equilibria_manifold}
\mathcal{R}= \left\{ \bm{X}\in\mathbb{R}^{2N} \; : \;  \frac{d l_{ij}}{dt}=0 \textup{ for } i,j=1,\ldots,N \right\}.
\end{equation}

As mentioned above, any solution of the 2-vortex problem is a relative equilibrium, and therefore $\mathcal{R}$ is the whole phase space $\mathbb{R}^{4}$. However, this is not the case for the 3-vortex problem. For instance, the set of equilateral triangular configurations $\mathcal{R}_\Delta$ is an invariant submanifold of $\mathcal{R}$ with $\textup{dim}\mathcal{R}_\Delta=4$. Indeed, the first vortices $X_1$ and $X_2$ are arbitrary points in $\mathbb{R}^2$, while the third one has only two possibilities, corresponding to the two possible orientations of the triangle $\Delta X_1 X_2 X_3$. Therefore, $\mathcal{R}_\Delta$ consists of two mutually disjoint branches.

The linear stability of $\mathcal{R}$ is given by the eigenvalues of its linearisation, where two of them are $0$, and two more are purely imaginary (possibly zero as well). Hence its stability is determined by the remaining two, which implythe following result.
\begin{theorem}
The invariant manifold $\mathcal{R}$ is unstable (of saddle type) if and only if \begin{equation}
    \gamma:=\Gamma_1\Gamma_2 + \Gamma_2\Gamma_3 + \Gamma_3\Gamma_1 <0. 
\end{equation}
In such a case, $\mathcal{R}_\Delta$ is a \textit{normally hyperbolic invariant manifold} of \eqref{eq:Nvortices_plane} with one-dimensional stable and unstable directions. More specifically, let $\varphi_t$ be the dynamical system induced by the 3-vortex problem. Then, there is a splitting of the tangent bundle
\[ T\mathbb{R}^6\vert_{\mathcal{R}_\Delta} = \mathcal{N}^s \oplus T\mathcal{R}_\Delta \oplus\mathcal{N}^u, \]
and local manifolds $W^{s,u}_{loc}(M)$ such that 

\smallskip

\begin{enumerate}
    \item $W^{s,u}_{loc}(\mathcal{R}_\Delta)$ is fibred as
    \[W^{s,u}_{loc}(\mathcal{R}_\Delta)= \bigcup_{p\in \mathcal{R}_\Delta} \mathcal{F}^{s,u}(p),\]
    where $\mathcal{F}^{s,u}(p)$ is a one-dimensional manifold based on $p\in\mathcal{R}_\Delta$,
    \item $\mathcal{F}^{s,u}(p)$ is tangent to $\mathcal{N}^{s,u}$ at $p$,
    \item The fibres $\mathcal{F}^{s}(p)$ and $\mathcal{F}^{u}(p)$ are positively and negatively invariant, respectively, that is for every $p\in\mathcal{R}_\Delta$ and $t\geq 0$,
    \begin{align*}
        \varphi_{t}\left( \mathcal{F}^s(p) \right) &\subset \mathcal{F}^s\left( \varphi_t(p)\right), \\
         \varphi_{-t}\left( \mathcal{F}^u(p) \right) &\subset \mathcal{F}^s\left( \varphi_{-t}(p)\right),
    \end{align*}
    \item There exist $C_{s,u}, \lambda_{s,u}>0$ such that if $q_{s,u}\in\mathcal{F}^{s,u}$, then
    \begin{align*}
        \Vert \phi_t(p)-\phi_t(q_s) \Vert < C_se^{-\lambda_st} \\
        \Vert \phi_{-t}(p)-\phi_{-t}(q_u) \Vert < C_ue^{-\lambda_ut}
    \end{align*}
\end{enumerate}
\end{theorem}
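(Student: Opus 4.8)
The plan is to reduce the $3$-vortex problem to its classical one-degree-of-freedom form in the shape variables, analyse the equilateral relative equilibrium there, and then lift the resulting hyperbolic picture back to the full $6$-dimensional phase space using the Euclidean and scaling symmetries of \eqref{eq:Nvortices_plane} together with standard \emph{normally hyperbolic invariant manifold} (NHIM) theory.

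First I would pass to the $SE(2)$-invariant shape variables, namely the squared intervortical distances $b_i:=l_{jk}^2$ (indices taken cyclically), for which \eqref{eq:Nvortices_plane} yields a closed system of ODEs admitting the conserved energy \eqref{eq:Energy_Nvortex} and the additional conserved combination $\Gamma_1\Gamma_2 b_3+\Gamma_2\Gamma_3 b_1+\Gamma_3\Gamma_1 b_2$; the reduced dynamics is therefore integrable, and its equilibria are precisely the relative equilibria of \eqref{eq:Nvortices_plane}. The equilateral configuration $b_1=b_2=b_3$ is such an equilibrium for every choice of circulations, and $\mathcal{R}_\Delta$ is exactly its orbit under $SE(2)$ together with the scaling quasi-symmetry $X_j\mapsto\lambda X_j$, $t\mapsto\lambda^2 t$; hence $\dim\mathcal{R}_\Delta=4$, it is $\varphi_t$-invariant, and it consists of two disjoint orientation branches.

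Next I would linearise at the equilateral configuration. Passing to the frame co-rotating with the relative equilibrium (angular velocity $\Omega=\Gamma/(2\pi l^2)$ if $\Gamma:=\Gamma_1+\Gamma_2+\Gamma_3\neq0$, a uniform translation if $\Gamma=0$), it becomes a genuine critical point of $\mathcal{H}_{\mathrm{rot}}=\mathcal{H}-\Omega L$, with $L$ the angular impulse, and an explicit computation of the $2\times2$ reduced linearisation --- equivalently of $\mathrm{Hess}\,\mathcal{H}_{\mathrm{rot}}$ restricted to the two shape directions --- shows that its eigenvalues are real and of opposite sign exactly when $\gamma:=\Gamma_1\Gamma_2+\Gamma_2\Gamma_3+\Gamma_3\Gamma_1<0$, purely imaginary when $\gamma>0$, and zero when $\gamma=0$; indeed they equal $\pm c\sqrt{-\gamma}$ for some $c=c(l,\Gamma_1,\Gamma_2,\Gamma_3)>0$. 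This gives the stated dichotomy and identifies the hyperbolic rates $\lambda_s=\lambda_u=c\sqrt{-\gamma}$. For the full $6$-dimensional linearisation at $p\in\mathcal{R}_\Delta$, the four directions tangent to $\mathcal{R}_\Delta$ carry exactly the neutral spectrum described in the statement: the rotation-angle direction is a genuine zero mode (by conservation of the angular impulse) and the scaling direction feeds into it, producing a nilpotent block with double eigenvalue $0$; the two directions translating the centre of vorticity contribute the purely imaginary pair $\pm i\Omega$, which degenerates to $0$ when $\Gamma=0$; and only the two transverse shape directions carry $\pm c\sqrt{-\gamma}$, so the stability of $\mathcal{R}_\Delta$ is governed solely by the sign of $\gamma$.

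Finally, when $\gamma<0$ I would assemble the NHIM structure. The induced flow on $\mathcal{R}_\Delta$ is conjugate to a rotation and thus has vanishing tangential Lyapunov exponents, whereas the normal directions expand and contract at the uniform rate $\lambda:=c\sqrt{-\gamma}>0$; uniformity of the hyperbolicity constants over the noncompact manifold $\mathcal{R}_\Delta$ follows from its $SE(2)$- and scaling-equivariance, which reduces every such estimate to a single base point. Hence $\mathcal{R}_\Delta$ is $r$-normally hyperbolic for every $r\geq1$, and the Fenichel and Hirsch--Pugh--Shub theorems yield the invariant splitting $T\mathbb{R}^6|_{\mathcal{R}_\Delta}=\mathcal{N}^s\oplus T\mathcal{R}_\Delta\oplus\mathcal{N}^u$, the local stable and unstable manifolds $W^{s,u}_{loc}(\mathcal{R}_\Delta)$, their fibrations $\bigcup_{p\in\mathcal{R}_\Delta}\mathcal{F}^{s,u}(p)$ by one-dimensional fibres tangent to $\mathcal{N}^{s,u}$ at $p$, the positive, respectively negative, invariance of these fibres, and the exponential contraction and expansion estimates with constants $\lambda_{s,u}=\lambda$ --- precisely items~1--4. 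The main obstacle is the explicit linearisation at the equilateral configuration, and in particular pinning the sign so that the two nonzero characteristic exponents come out as $\pm c\sqrt{-\gamma}$ with $c>0$: this is the classical Synge calculation, and care is needed with the circulation-weighted symplectic form, the passage to the rotating frame, and the restriction to the shape subspace. A secondary point is the noncompactness of $\mathcal{R}_\Delta$, which requires the NHIM theorems in their uniform (bounded-geometry) version --- legitimate here exactly because of the equivariance.
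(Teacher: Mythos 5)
Your proposal follows essentially the same route as the paper, whose proof consists of two citations: Aref (2009) for the linear stability of the equilateral relative equilibrium (the classical Synge-type calculation you describe, via the reduction to the squared intervortical distances) and standard invariant-manifold theory (the paper cites Wiggins) for the splitting, the fibration of $W^{s,u}_{loc}(\mathcal{R}_\Delta)$, and the exponential estimates. You have fleshed out precisely the two ingredients the paper outsources; the reduction to shape variables with the two conserved quantities, the spectral picture $\{0,0,\pm i\Omega,\pm c\sqrt{-\gamma}\}$ matching the paper's own remark preceding the theorem, and the appeal to Fenichel/Hirsch--Pugh--Shub are all sound.

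One concrete caveat. Your claim that uniformity of the hyperbolicity constants over the noncompact manifold $\mathcal{R}_\Delta$ ``follows from its $SE(2)$- and scaling-equivariance'' does not work for the scaling part: $X_j\mapsto\lambda X_j$ is a symmetry only after the time reparametrisation $t\mapsto\lambda^2 t$, so the normal exponential rate $c(l,\Gamma_1,\Gamma_2,\Gamma_3)\sqrt{-\gamma}$ scales like $l^{-2}$ and tends to $0$ as the triangle size $l\to\infty$. Hence no single pair $C_{s,u},\lambda_{s,u}>0$ can serve uniformly over all of the four-dimensional $\mathcal{R}_\Delta$ as item~4 (read with the constants quantified before $p$) would require. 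The estimates do hold uniformly on each fixed-size (equivalently fixed-energy) slice $\mathcal{R}_\Delta(H)$, where translational and rotational equivariance legitimately reduce everything to a single base point --- and this is the set the paper actually works with afterwards. On the full $\mathcal{R}_\Delta$, normal hyperbolicity survives only in the relative sense (normal rates dominate the at-worst-polynomial tangential growth pointwise), which still yields the splitting and the fibred local manifolds but with $p$-dependent constants. You should either restrict the NHIM step to a fixed energy level or state the constants as depending on $l$.
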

\begin{proof}
    We refer to \cite{Aref09} for the full analysis of the linear stability of relative equilibria. The structure of the tangent bundle into its stable and unstable parts is given by standard invariant manifold theory of dynamical systems, see for instance \cite{Wiggins94}.
\end{proof}
As observed in \cite{Aref09}, we have the following result.
\begin{corollary}
\label{CORO:equilibria_zeroGamma}
If $\Gamma=0$, any equilateral triangular configuration is an unstable relative equilibrium.
\end{corollary}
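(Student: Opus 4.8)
The plan is to combine two facts that are already on the table: the characterisation of equilateral triangles as relative equilibria (Newton, Theorem~2.2.1, quoted above) and the instability criterion $\gamma<0$ from the preceding theorem. First I would recall that an equilateral triangular configuration of three point vortices is a relative equilibrium for \emph{any} choice of circulations $\Gamma_1,\Gamma_2,\Gamma_3$, so that such a configuration automatically belongs to the submanifold $\mathcal{R}_\Delta\subset\mathcal{R}$. In particular it is a relative equilibrium, and the assertion that it is \emph{unstable} is then a statement about the linearised dynamics transverse to $\mathcal{R}$, which is exactly what the preceding theorem controls.

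Next I would reduce the instability criterion to an elementary algebraic identity. Expanding $\Gamma^2=(\Gamma_1+\Gamma_2+\Gamma_3)^2$ gives
\[
\Gamma^2=\Gamma_1^2+\Gamma_2^2+\Gamma_3^2+2\gamma,
\qquad\text{i.e.}\qquad
\gamma=\tfrac12\bigl(\Gamma^2-(\Gamma_1^2+\Gamma_2^2+\Gamma_3^2)\bigr).
\]
Setting $\Gamma=0$ yields $\gamma=-\tfrac12(\Gamma_1^2+\Gamma_2^2+\Gamma_3^2)\le 0$, with equality only in the degenerate case $\Gamma_1=\Gamma_2=\Gamma_3=0$. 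Excluding that trivial case (in which there is effectively no vortex dynamics), the condition $\Gamma=0$ forces $\gamma<0$.

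Finally I would invoke the preceding theorem: since $\gamma<0$, the manifold $\mathcal{R}$ is unstable of saddle type, and $\mathcal{R}_\Delta$ is a normally hyperbolic invariant manifold with one-dimensional stable and unstable directions. As every equilateral triangular configuration is a point of $\mathcal{R}_\Delta$, it inherits this transverse hyperbolicity and is therefore an unstable relative equilibrium, which is the claim.

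I do not expect a genuine obstacle here; the proof is essentially the above one-line algebra together with a citation. The only points worth a careful sentence are the exclusion of the all-zero circulation case and, relatedly, the reminder that for $\Gamma=0$ the configuration is stationary only up to the uniform translation noted earlier, so that ``unstable'' must be read in the co-moving (shape) frame in which $\mathcal{R}_\Delta$ is invariant.
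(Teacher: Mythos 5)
Your argument is correct and is essentially the paper's own proof: the paper likewise deduces the result in one line from the identity $\Gamma^2=(\Gamma_1^2+\Gamma_2^2+\Gamma_3^2)+2\gamma$, which under $\Gamma=0$ forces $\gamma<0$, and then appeals to the preceding stability theorem. Your extra remarks on excluding the all-zero circulation case and on reading instability in the co-moving frame are sensible refinements but not a different route.
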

\begin{proof}
    It follows directly from the relationship $\Gamma^2 = (\Gamma_1^2+\Gamma_2^2+\Gamma_3^2) + 2\gamma$.
\end{proof}

In \cite{Muller15,Hirt18}, $\Omega$ blockings have been described by relative equilibria of three point vortices, whose circulations are given by $\Gamma_1=\Gamma_2$ and $\Gamma_3=-2\Gamma_1$, so that the total circulation $\Gamma$ vanishes. As stated in Corollary~\ref{CORO:equilibria_zeroGamma}, these configurations are unstable, of saddle type, so that small perturbations from such constellations deviate exponentially fast, unless the perturbed initial condition lies on $W^s_{loc}(\mathcal{R}_\Delta)$. Since $\textup{dim}\;W^s_{loc}(\mathcal{R}_\Delta)= 5$, it is a Lebesgue null set, and therefore this situation does not happen in practice. The association of atmospheric blockings to invariant and dynamically unstable objects has been also reported in \cite{Faranda16}.

Instead, one can explore initial conditions which remain close to a relative equilibrium configuration for large time windows \cite{Ragone18,Ragone20}. In this case, given an energy value $H$, we consider the observable
\begin{equation}
    \label{eq:observable_releq}
    A(X_1,X_2,X_3):= \left\vert \cos\theta_1 -1/2\right\vert + \left\vert \cos\theta_2 -1/2\right\vert  + \left\vert\frac{ l_{12} + l_{23} + l_{31} }{3} -l(H)\right\vert,
\end{equation}
where $\theta_1$ is the angle between the vectors $X_2-X_1$ and $X_3-X_1$, $\theta_2$ is the angle betwwen the vectors $X_1-X_2$ and $X_3-X_2$, and $l(H)$ is the length of the unique equilateral configuration associated to the energy level $\mathcal{H}(X_1,X_2,X_3)=H$, that is $l(H)=e^{-\frac{4\pi H}{\gamma}}$. Clearly $A(\bm{X})\equiv A(X_1,X_2,X_3)=0$ if and only if $(X_1,X_2,X_3)\in\mathcal{R}_\Delta(H)$, i.e. if the points $X_1,X_2,X_3$ form an equilateral triangle of length $l(H)$. 

Additionally, we consider additive stochastic perturbations, as given by the SDE
\begin{equation}
    \label{eq:SDE_vortex}
d\bm{X}_t = \mathbb{J} \nabla\mathcal{H}(\bm{X}_t) + \sigma dW_t,
\end{equation}
with given initial condition $\bm{X}_0$. 
Here, $\sigma>0$ is the noise strength, and $W_t$ is a standard brownian motion in $\mathbb{R}^6$. We denote the stochastic process induced by \eqref{eq:SDE_vortex} as $X_t(\omega)$. For a given time window $T>0$, let
\begin{equation}
    \label{eq:averaged_observable}
\bar{A}_T(\bm{X},\omega):= \frac{1}{T}\int_0^TA\left( \bm{X}_s(\omega) \right)ds.
\end{equation}
For $r>0$ sufficiently small, the rare event consists of the set
\[
    \left\{ (\bm{X}_0,\omega) : A_T(\bm{X}_s(\omega))<r \right\}.
\]
Therefore, our proposed limit-state function $G$ is defined as
\begin{equation}
    \label{eq:limit-state function_vortex}
G_r(\bm{X}):= \bar{A}_T(\bm{X})-r.
\end{equation}

\paragraph{Numerical results}

In the upcoming numerical explorations, we fix the circulations $\Gamma_1=\Gamma_2=1$ and $\Gamma_3=-2$. We fix the initial condition at an energy level $\mathcal{H}(\mathbf{X})=1$ on an equilateral triangle configuration of length $l(1)$. The threshold value used in this case is, $r=0.25$. We solve the $3$-vortex problem using the Euler-Maruyama method, with step size $\Delta T=0.02$ on a time horizon $T=0.5$. Additionally, the ALDI method is solved numerically using a step size $\Delta \tau=0.0005$ on an integration timescale $\tau=2.5$. In the importance sampling step, the number of Gaussian components used was 1.

Rare events are defined in terms of anomalous vortex configurations,
characterized by unusually persistent intervortical distances over a prescribed
time window. Figure~\ref{fig:vortex_structure} shows representative vortex
trajectories at different times, highlighting the emergence of dynamically
coherent structures associated with rare-event behaviour. While typical
configurations rapidly separate, rare-event trajectories remain clustered and
exhibit a markedly different geometric evolution. Although crude Monte Carlo sampling is prohibitively expensive in this regime,
a high-accuracy reference value $P_{\mathrm{ref}}$ is available from a
computationally intensive benchmark calculation and is used solely for
validation purposes.

\begin{figure}[h!]
    \centering
    \includegraphics[width=\textwidth]{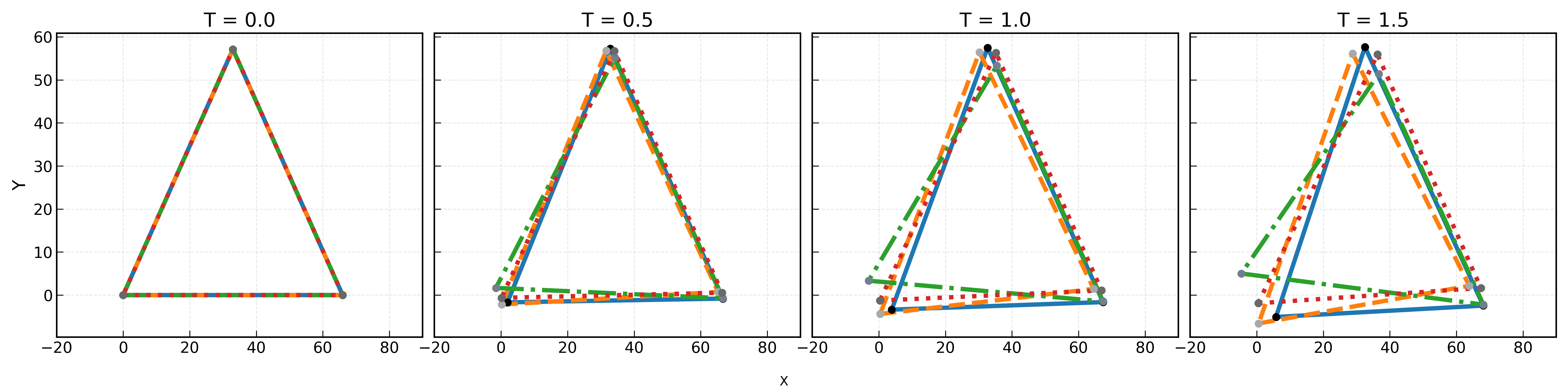}
    \caption{Evolution of representative three-vortex configurations at
    different times when the total circulation $\Gamma=0$. The system starts on an equilateral configuration, which is unstable in the absence of noise. Rare-event trajectories remain dynamically coherent,
    whereas typical trajectories exhibit rapid separation from the equilateral configuration.}
    \label{fig:vortex_structure}
\end{figure}

This behaviour is further quantified in
Figure~\ref{fig:vortex_distance_time}, which displays the evolution of
intervortical distances for different time horizons. For longer horizons,
typical trajectories show strong separation, whereas rare-event trajectories
remain close over extended periods, confirming the dynamical relevance of the
chosen failure event.

\begin{figure}[h!]
    \centering
        \begin{overpic}[width=0.65\textwidth]{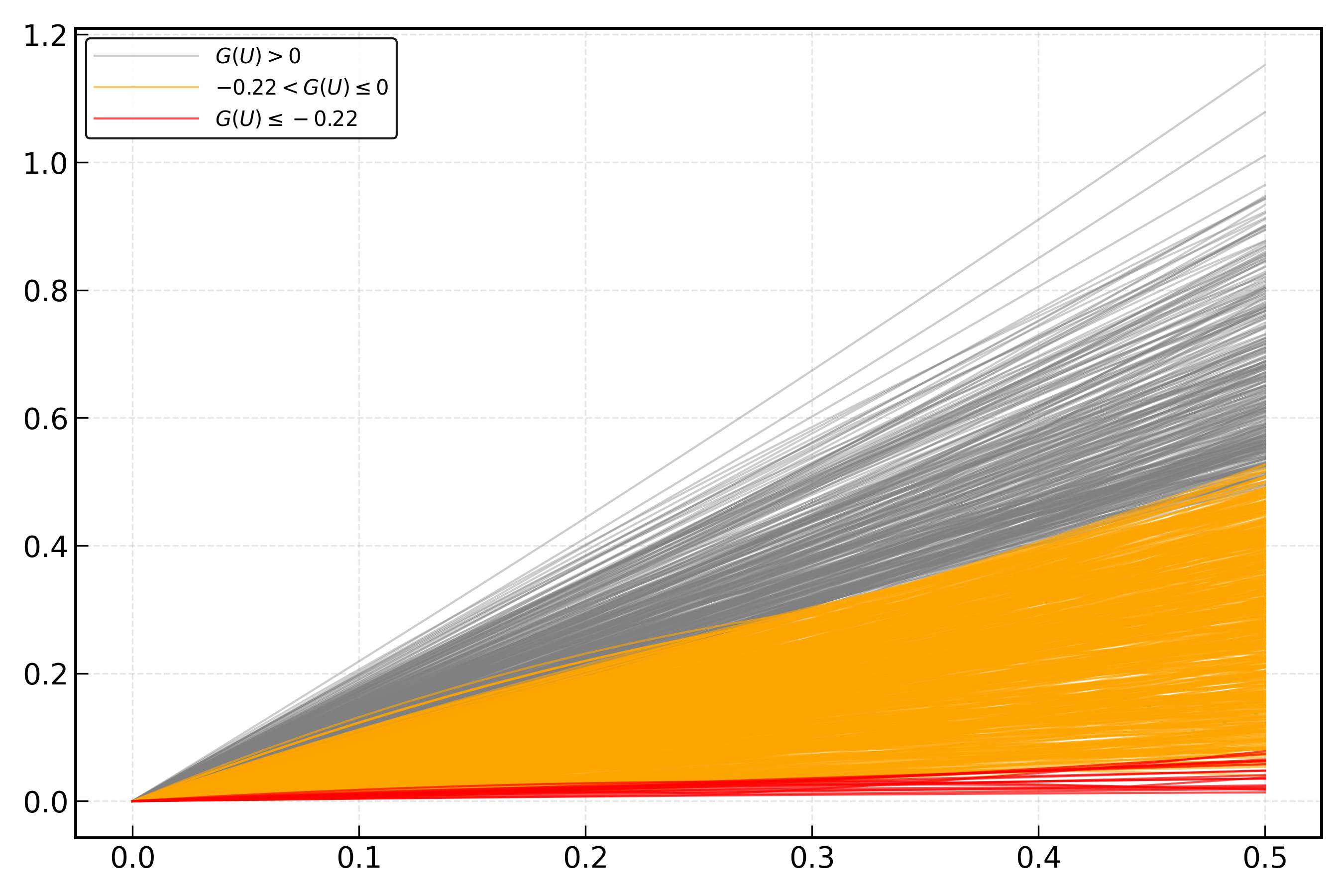}
        \put(50,0){\footnotesize$t$}
        \put(-5,30){\footnotesize$A(\mathbf{X}_t)$}
        \end{overpic}    
        
        \caption{The observable $A$ evaluated along the trajectories $X_t$ sampled using ALDI, as a function of time for different time. Here $J=1000$. Rare-event trajectories have relatively low value of $A$, compared to typical behaviour. The rare event sampled by ALDI corresponds to the trajectories portrayed in orange and red, corresponding to those for which $G\leq -0.22$ (647 particles) and $G\leq 0$ (18 particles), respectively, showing that the event is no longer rare for the ALDI samples.} 
    \label{fig:vortex_distance_time}
\end{figure}

Figure~\ref{fig:vortex_mix} summarizes the behaviour of the ALDI-generated
proposals and the subsequent mixture-based importance sampling step for the
point-vortex problem. Panel~(a) shows the number of particles lying inside the
failure domain as a function of the ALDI ensemble size $J$. As in the previous
examples, this number increases monotonically with $J$, indicating that larger
ensembles provide a more faithful exploration of the dynamically relevant rare
event region. Despite the strong sensitivity of the vortex dynamics to initial
conditions, ALDI consistently generates a non-negligible fraction of samples in
the failure set, which is essential for constructing effective IS proposals.

Panel~(b) of Figure~\ref{fig:vortex_mix} displays the absolute error of the
mixture-based estimator $\widehat P_f$ as a function of the number of Gaussian
mixture components used in the proposal. For sufficiently large ensembles, the
estimation error decreases as the mixture becomes more expressive, reflecting an
improved approximation of the ALDI sample cloud. For smaller ensembles, however,
the error exhibits increased variability when too many components are used,
highlighting the trade-off between model flexibility and the amount of available
fitting data.

\begin{figure}[h!]
    \centering
    \begin{subfigure}[t]{0.395\textwidth}
        \centering
         \centering
        \begin{overpic}[width=\textwidth]{./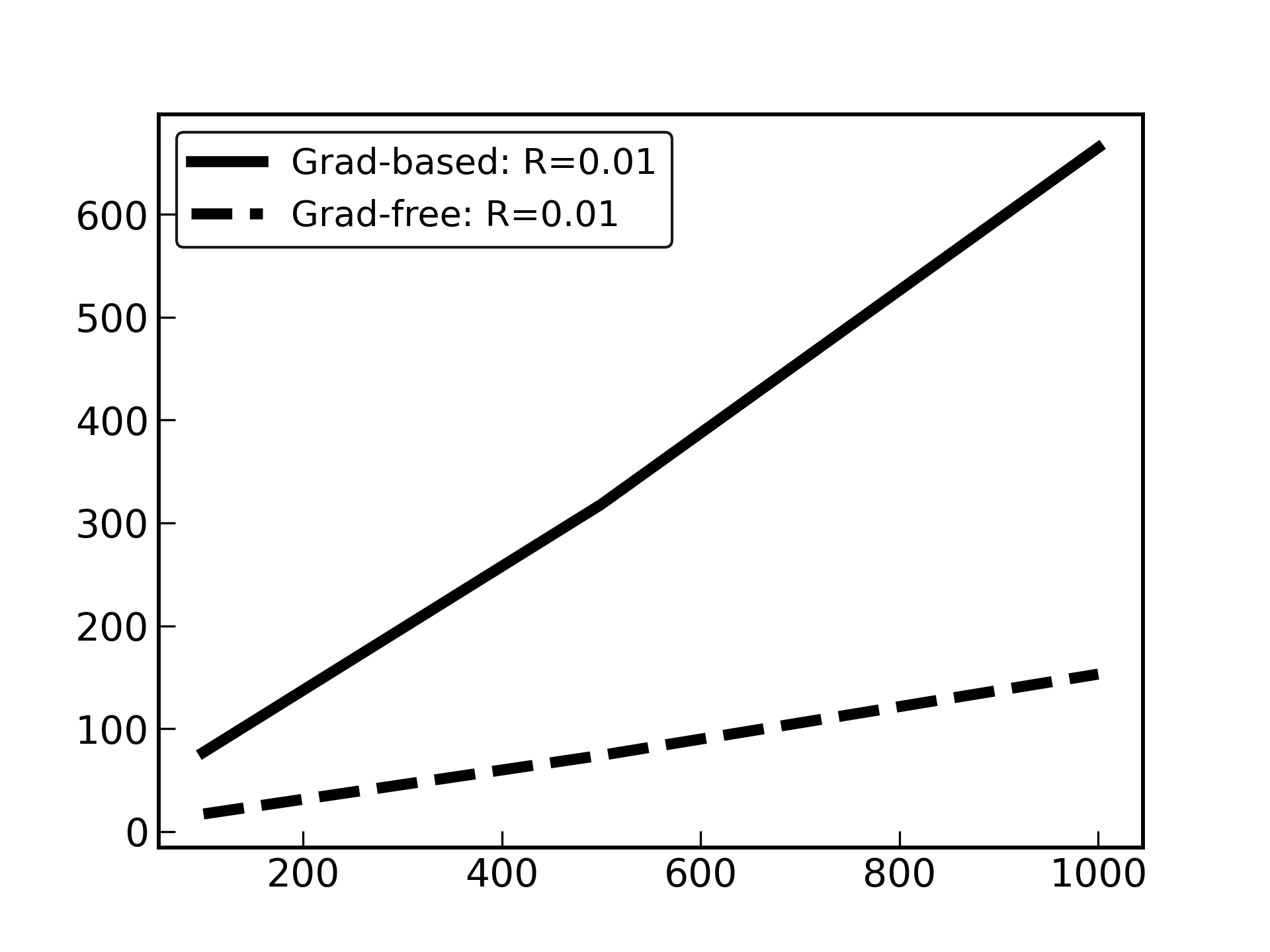}
        \put(50,-2){\footnotesize$J$}
        \put(-2,20){\footnotesize \begin{turn}{90} $\#$ particles in $F$\end{turn}}
        \end{overpic}
        \caption{}
    \end{subfigure}
    \hfill
    \begin{subfigure}[t]{0.59\textwidth}
        \centering
        \begin{overpic}[width=\textwidth]{./new_vortex/hyp__Error_vs_num_of_gaussian}
        \put(20,0){$\#$ Gaussian mixture components}
        \put(2,18){\small \begin{turn}{90} $\lvert \widehat P_f- P_{\mathrm{ref}}\rvert$\end{turn}}
    \end{overpic}
        \caption{}
    \end{subfigure}
    \caption{In (a), the number of particles lying inside of the failure domain is shown. In (b), the absolute error {of the estimator $\hat{P}_f$} as a function of the number of Gaussian mixture
    components used in the importance-sampling proposal Alg.~\ref{alg:aldi_mixture_is} with $10^3$ samples.}
    \label{fig:vortex_mix}
\end{figure}

The performance of the importance sampling correction is examined in
Figure~\ref{fig:vortex_is_error}, which reports the absolute error of the IS
estimator as a function of the number of importance samples. For all ensemble
sizes, the error decreases at the expected Monte--Carlo rate, confirming that the
mixture proposals fitted to the ALDI output yield stable and consistent IS
estimates. Larger ALDI ensembles lead to uniformly smaller errors, demonstrating
that improved localization of the rare-event region during the ALDI phase
directly translates into higher sampling efficiency in the IS step.
\begin{figure}[h!]
    \centering
    \begin{overpic}[width=0.85\textwidth]{./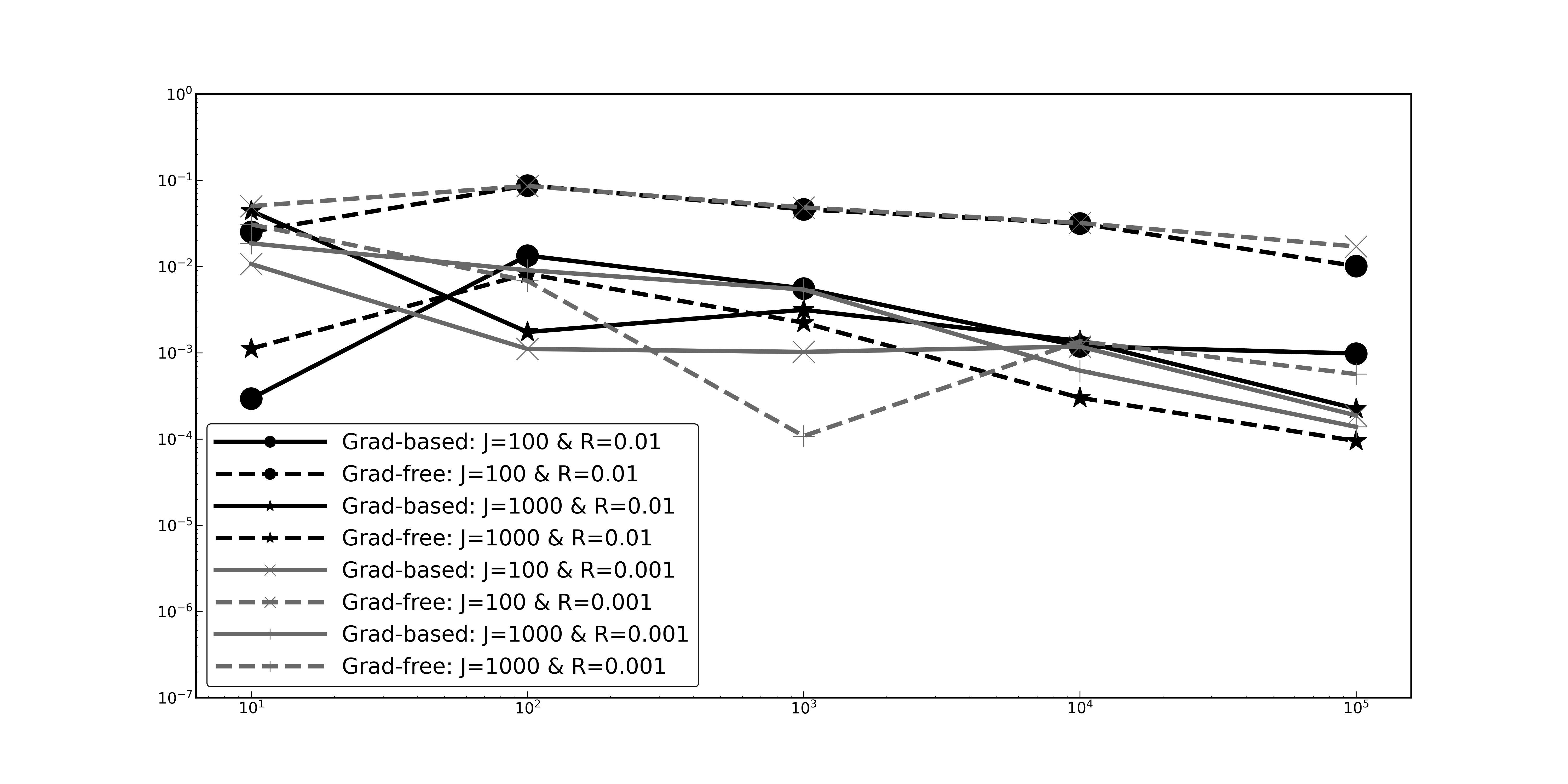}
        \put(5,15){\small \begin{turn}{90}
        $\lvert \widehat P_f^{\mathrm{IS}}- P_{\mathrm{ref}}\rvert$
        \end{turn}}
        \put(40,00){\small $\#$ IS samples}
    \end{overpic}
    \caption{Absolute error of the IS estimator in Alg.~\ref{alg:aldi_mixture_is} as a function of
    the number of IS samples for different ALDI ensemble sizes.}
    \label{fig:vortex_is_error}
\end{figure}

\section{Conclusions}
\label{SEC:conclusions}
We have proposed an ALDI approach for the
efficient sampling of rare events. Our analysis shows that, after introducing a smooth
approximation of the limit-state function, the resulting posterior distribution
retains the correct small-noise limit and is amenable to sampling by ALDI. The
method naturally adapts to anisotropies
in the posterior induced by low-probability geometries.

The numerical experiments demonstrate that ALDI captures the qualitative and
quantitative features of rare-event sets across a hierarchy of models: from
convex and saddle-type finite-dimensional examples, and finally to point-vortex models for atmospheric
blockings. In all cases, the sampler concentrates around the relevant
near-critical manifolds and yields accurate proposal distributions for
self-normalised importance sampling.

Overall, ALDI provides an effective and robust tool for rare-event estimation in
complex dynamical systems, particularly in settings where gradients are
unreliable or unavailable and where the geometry of the event is highly
anisotropic. Future work includes quantitative convergence analysis of ALDI,
extensions to fully time-dependent PDEs and stochastic systems,
multi-level or adaptive variants that reduce the cost of sampling,
and combinations with likelihood-free or surrogate-based approaches
for high-dimensional geophysical applications.


\section*{Code availability}
The implementation of the methods and all scripts used to produce the results in this paper are publicly available at \cite{Chakraborty25}.

\section*{Acknowledgments}
The work has been funded by Deutsche Forschungsgemeinschaft (DFG) through
the grant CRC 1114 ‘Scaling Cascades in Complex Systems’ (project number 235221301,
project A02 - Multiscale data and asymptotic model assimilation for atmospheric flows).

The authors would like to thank Antonia Liebenberg for providing parts of the code used in the numerical experiments. The authors also thank Robert Gruhlke for insightful discussions.

\bibliography{references}
\bibliographystyle{siamplain}

\end{document}